\title{Geometric interpretation of Hermitian modular forms via Burkhardt invariants}
\author{Atsuhira Nagano and Hironori Shiga}

\documentclass[10pt]{article}
\usepackage{mathrsfs,bm,graphics,amsfonts,amsthm,amssymb,amsmath,amscd,booktabs}
\usepackage[dvips]{graphicx,color}
\pagestyle{plain}
\setlength{\oddsidemargin}{0 pt}
\setlength{\evensidemargin}{0 pt}
\setlength{\headsep}{0pt}
\setlength{\headheight}{0pt}
\setlength{\topmargin}{0pt}
\setlength{\baselineskip}{ 0pt}
\setlength{\textwidth}{160mm}
\setlength{\textheight}{240mm}

\def\bigzerou{\smash{\lower1.7ex\hbox{\b 0}}}

\newtheorem{thm}{Theorem}[section]
\newtheorem{df}{Definition}[section]
\newtheorem{lem}{Lemma}[section]
\newtheorem{prop}{Proposition}[section]
\newtheorem{rem}{Remark}[section]

\newtheorem{cor}{Corollary}[section]

\def\comment#1{{ }}
\makeatletter
  
      \@addtoreset{equation}{section}
 \usepackage{fancybox}
\begin{document}
\maketitle
\setlength{\baselineskip}{13 pt}
 \renewcommand{\thefootnote}{\fnsymbol{footnote}}

\begin{abstract}
We  give a complete theta expression of a pair of Hermitian modular forms  as an inverse period mapping of lattice polarized $K3$ surfaces.
Our result gives a non-trivial relation among moduli of $K3$ surfaces, theta functions and the finite complex reflection group of rank $5$.
\end{abstract}

\footnote[0]{Keywords:  $K3$ surfaces ; Hermitian modular forms ; Theta functions ; Complex reflection groups.  }
\footnote[0]{Mathematics Subject Classification 2010:  Primary 14J28 ; Secondary   11F11, 11F27, 20F55.}
\footnote[0]{Running head: Geometric interpretation of Hermitian modular forms}
\setlength{\baselineskip}{14 pt}

\section*{Introduction}
Hermitian modular forms are automorphic forms on bounded symmetric domains of type $I$.
We can regard them as  natural extensions of elliptic, Hilbert or Siegel modular forms.
Classically, elliptic modular forms are derived from the moduli of elliptic curves.
Namely, an elliptic curve $\mathbb{C}/(\mathbb{Z}+\mathbb{Z} \tau)$, where $\tau\in \mathbb{H}$ is  called the period, can be given  by the Weierstrass equation
\begin{align}\label{EllipticWeierstrass}
y^2 = 4x^3 -g_2(\tau) x -g_3(\tau)
\end{align}
and $ g_2(\tau)$ and $g_3(\tau)$ give elliptic modular functions.
Also, Hilbert and Siegel modular forms of degree $2$ are derived from the moduli of Abelian surfaces.
However, we do not have canonical  moduli interpretations of Hermitian modular forms.
In this paper, 
by using lattice polarized $K3$ surfaces,
we will give a natural moduli interpretation of Hermitian modular forms, which are closely related  to complex reflection groups.

Let us recall the works \cite{SI}, \cite{NHilb} and \cite{CD}.
In those works, 
they give elliptic, Hilbert and Siegel modular forms respectively,
by using the inverse correspondences of period mappings of $K3$ surfaces.
There are several good applications of their results in number theory and arithmetic geometry.
We will obtain a natural extension of those works.
By virtue of the Torelli theorem and the surjectivity of the period mappings,
we can consider moduli of $K3$ surfaces via period domains.
The period domain corresponding to  a family of lattice polarized $K3$ surfaces of Picard number $\rho$
 is given by the bounded symmetric domain of type $IV$ attached to the orthogonal group $SO_0 (2,20-\rho)$.
This implies that
we can obtain Hermitian modular forms by using $K3$ surfaces of Picard number $16$,
since there is an isomorphism
$SO_0(2,4) \simeq SU(2,2)$.
In this paper,
we will use a family of $K3$ surfaces of (\ref{S(t)}) studied in \cite{N}.
This family  naturally contains the families of \cite{SI}, \cite{NHilb} and \cite{CD}
and
is characterized by the lattice of (\ref{LatticeA}),
which is the simplest lattice satisfying the Kneser condition.
The purpose of the present paper 
is to determine
 the complete expression of the inverse period mapping of the family (\ref{S(t)})
in Hermitian modular forms
by using  theta functions.

Here, theta functions give us a combinatorial viewpoint.
Finite complex reflection groups are very important in combinatorics
and  classified into 37 types by Shephard-Todd  \cite{ST}.
In the works \cite{SI}, \cite{NHilb},  \cite{CD} and this paper, 
 the modular forms are constructed 
 by using the appropriate invariants for complex reflection groups  
and the theta functions in each case (see Table 1).
For example, 
the  inverse period mapping in \cite{CD} 
gives a pair of Siegel modular forms of degree $2$ (see Proposition \ref{PropCDIgusa}).
We notice that,
by virtue of the result of Runge \cite{R} Section 4,
 this pair 
has a theta expression using the invariants for the action of the reflection group $G_{31}$  in \cite{ST}  on $\mathbb{P}^3(\mathbb{C})$.
In our argument,
we  will apply the action of the  reflection group $G_{33}$,
which is the unique exceptional group of rank $5$ in \cite{ST},
on $\mathbb{P}^4(\mathbb{C})$.
The quotient group $G_{33}/\{\pm 1\}$ is called the Burkhardt group, which is precisely studied in \cite{B} and \cite{FSM}. 
Our main result is expressed
 in terms of  the Burkhardt invariants  and the theta functions which are investigated by  \cite{DK} and  \cite{FSMTheta}.

\begin{table}[h]
\center
\begin{tabular}{ccccc}
\toprule
   Modular Forms    & Shephard-Todd Groups  &Theta Functions &  $K3$ Surfaces  \\
 \midrule
Elliptic &  No.8 &  Classical  & \cite{SI} \\
Hilbert & No.23  &  \cite{Muller}  & \cite{NHilb} \\
Siegel & No.31  &  \cite{I35}  & \cite{CD}\\
Hermitian & No.33 &   \cite{DK}  & This Paper \\
 \bottomrule
\end{tabular}
\caption{Modular forms, complex reflection groups, theta functions and  $K3$ surfaces}
\end{table}

The contents are as follows.
In section 1, we will survey the results of \cite{N}.
The period mapping is described  by the  period integrals of $K3$ surfaces.
In section 2, by using the geometry of elliptic surfaces, 
we will give an explicit construction of our period integrals (Theorem \ref{ThmGeomCycle}). 
In section 3, we will see basic properties of Hermitian modular forms.
In section 4,
after we  survey  properties of theta functions and the Burkhardt group,
we will  determine the exact expression of Hermitian modular forms as the inverse period mapping 
 by considering the  degenerations of our $K3$ surfaces (Theorem \ref{ThmThetaExpression}).

Thus,
we will give a geometric meaning of Hermitian modular forms
by using a sequence of the families  of $K3$ surfaces, 
 which are concordant with theta functions and  complex reflection groups, as shown in Table 1.
 Such constructions of modular forms must be useful in number theory from   theoretical and computational viewpoints.
Theta functions and $\mathbb{Q}$-rational modular forms have  many fruitful applications in number theory (for example, see \cite{Shimura} Chapter VII).
Our result gives a non-trivial relation
between the periods of  $K3$ surfaces and  
 the pair of $\mathbb{Q}$-rational Hermitian modular forms (Corollary \ref{CorQ-rational}).
Hence,  
our family of $K3$ surfaces will give an explicit  geometric model
with good  arithmetic properties, 
just as the family of elliptic curves (\ref{EllipticWeierstrass}) does.
Furthermore, 
our $K3$ surfaces do not admit the Shioda-Inose structure.
This fact means that 
the moduli space of our family of $K3$ surfaces beyonds 
that of the family of principally polarized Abelian surfaces.
Therefore, 
the authors expect that our results will provide non-trivial applications in number theory and arithmetic geometry.
For example,
it is expected that our results will give a good test case for the theory of complex multiplication of $K3$ surfaces (for example, see \cite{S}).

\section{Lattice polarized $K3$ surfaces}


   In this section, we survey the results of \cite{N} about periods of a family of  lattice polarized  $K3$ surfaces.
For detailed proofs, see \cite{N}.
 
 Let $t=(t_4,t_6,t_{10},t_{12},t_{18})\in \mathbb{C}^5-\{0\}.$
 In \cite{N},
 we study the hypersurfaces
 \begin{align}\label{S(t)}
 S(t): z^2=y^3 + (t_4 x^4 w^4 + t_{10} x^3 w^{10})y + (x^7 + t_6 x^6 w^6 + t_{12} x^5 w^{12} + t_{18} x^4 w^{18})
 \end{align}
 of weight $42$
 in the weighted projective space 
 $\mathbb{P}(6,14,21,1)={\rm Proj} (\mathbb{C} [x,y,z,w])$.
 There is an action of the multiplicative group $\mathbb{C}^*$ 
 on $\mathbb{P}(6,14,21,1)$
 ($\mathbb{C}^5-\{0\}$, resp.)
 given by
 $(x,y,z,w) \mapsto (x,y,z,\lambda^{-1} w)$
 ($t=(t_4,t_6,t_{10},t_{12},t_{18}) \mapsto \lambda \cdot t =(\lambda^4 t_4, \lambda^6 t_6, \lambda^{10 } t_{10}, \lambda^{12 } t_{12}, \lambda^{18} t_{18})$, resp.)
  for $\lambda\in \mathbb{C}^*$.
So,
 we naturally obtain the family
 $$
\{S ([t]) \hspace{0.5mm} | \hspace{0.5mm} [t] \in \mathbb{P}(4,6,10,12,18) \} \rightarrow \mathbb{P}(4,6,10,12,18).
 $$
 Here, 
 the point of $\mathbb{P}(4,6,10,12,18)$ corresponding to $t=(t_4,t_6,t_{10},t_{12},t_{18}) \in \mathbb{C}^5 -\{0\}$
 is denoted by
 $[t]=(t_4:t_6:t_{10}:t_{12}:t_{18})$.
 
 Set
 \begin{align}
 T= \mathbb{P}(4,6,10,12,18) - \{[t] \in \mathbb{P}(4,6,10,12,18) \hspace{0.5mm} | \hspace{0.5mm} t_{10}=t_{12}=t_{18}=0 \}.
\end{align}
 Then, a member of the family
 $$
 \{S([t]) \hspace{0.5mm} | \hspace{0.5mm} [t]\in T \} \rightarrow T
 $$
 is a  $K3$ surfaces.
Here, let us consider the $\mathbb{C}^*$-bundle $T^*\rightarrow T$ naturally coming from the above action $t\mapsto \lambda\cdot t$ of $\mathbb{C}^*$.
We  also have the family 
 $$
 \{S(t) \hspace{0.5mm} | \hspace{0.5mm} t \in T^*\} \rightarrow T^*
 $$
 of $K3$ surfaces.
 The  action of $\mathbb{C}^*$ on $(x,y,z,w)$ and $t $ gives an isomorphism
 $
 \lambda : S(t) \mapsto S(\lambda\cdot t)
 $
 for any $\lambda\in \mathbb{C}^*.$
 
 There exists the unique holomorphic $2$-form  on a $K3$ surface up to a constant factor.
 By putting $x_0=\frac{x}{w^6},y_0=\frac{y}{w^{14}}, z_0=\frac{z}{w^{21}}$,
we have a holomorphic family $\{\omega_t\}_{t\in T^*}$,
where $\omega_t=\frac{dx_0 \wedge dy_0}{z_0}$,
 of holomorphic $2$-forms on the $K3$ surfaces $S(t)$.
Since the above action of $\mathbb{C}^*$ 
 transforms $\omega_t$ into $\lambda^{-1} \omega_t$,
 the above isomorphism $\lambda$ 
 gives the correspondence
 \begin{align}\label{omegalambda}
 \lambda^{*} \omega_{\lambda \cdot t} = \lambda^{-1} \omega_t.
 \end{align}
 Hence,
 we have
 the family $\{\omega_{[t]}\}_{[t]\in T}$
 of holomorphic $2$-forms
 for the family $\{S([t]) \hspace{0.5mm} | \hspace{0.5mm} [t]\in T \}$ of $K3$ surfaces.

 In this paper, let $U$ be the unimodular hyperbolic lattice of rank $2$.
 Also, let $A_j(-1)$ and $E_j(-1)$ be the root lattices.
 Then, the $K3$ lattice $L_{K3}$, which is isomorphic to the $2$-homology group of a $K3$ surface, is isometric to
 $II_{3,19}=U\oplus U\oplus E_8(-1) \oplus E_8(-1) \oplus E_8(-1).$
 By the way, the lattice
 \begin{align}\label{LatticeA}
 A=U \oplus U \oplus A_2(-1)
 \end{align}
 of signature $(2,4)$ is the simplest lattice satisfying the Kneser condition,
 which is an arithmetic condition of quadratic forms.
Our $K3$ surfaces are closely related to this lattice as follows.
 From the lattice $A$, we define the $4$-dimensional space
 $
\mathcal{D}_M= \{ [\xi] \in \mathbb{P} (A \otimes \mathbb{C}) \hspace{0.5mm} | \hspace{0.5mm}      (\xi, \xi) =0,  (\xi, \overline{\xi}) >0\}.
$
Let $\mathcal{D}$ be a connected component of $\mathcal{D}_M$.
Then, we have a subgroup
 $
 O^+(A) = \{\gamma \in O(A) \hspace{0.5mm} | \hspace{0.5mm} \gamma(\mathcal{D}) = \mathcal{D}\},
 $
 satisfying $O(A)/O^+(A) \simeq \mathbb{Z}/2\mathbb{Z}$.
Also, set
 $
  \tilde{O}(A) = {\rm Ker}(O(A) \rightarrow {\rm Aut}(A^\vee/A)),
 $
 where $A^\vee ={\rm Hom}(A,\mathbb{Z})$.
 Set 
 \begin{align}
 \Gamma = \tilde{O}^+(A) = O^+(A) \cap \tilde{O}(A).
 \end{align}

\begin{lem}\label{LemStableG}
$$
O^+(A) / \Gamma \simeq \mathbb{Z}/2\mathbb{Z}.
$$
\end{lem}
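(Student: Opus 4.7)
The plan is to identify the discriminant form of $A$, exhibit a concrete element of $O^+(A)$ that acts nontrivially on it, and then conclude with the second isomorphism theorem. Because $U$ is unimodular, the discriminant group of $A=U\oplus U \oplus A_2(-1)$ reduces to that of the $A_2(-1)$ summand, giving $A^\vee/A \simeq \mathbb{Z}/3\mathbb{Z}$, and hence $\operatorname{Aut}(A^\vee/A) \simeq \{\pm 1\}$. In particular the subgroup $\tilde{O}(A) \trianglelefteq O(A)$ has index at most $2$.

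The key step is to check that $-\operatorname{id}_A$ lies in $O^+(A)\setminus \tilde{O}(A)$. It is clearly not in $\tilde{O}(A)$, since it acts as multiplication by $-1$ on $A^\vee/A \simeq \mathbb{Z}/3\mathbb{Z}$, which is nontrivial. For membership in $O^+(A)$, I would argue projectively: the map induced by $-\operatorname{id}_A$ on $\mathbb{P}(A\otimes\mathbb{C})$ sends $[\xi]\mapsto[-\xi]=[\xi]$, so it is the identity of projective space, and thus stabilises $\mathcal{D}$ trivially without any orientation computation. This in particular shows $\tilde{O}(A)\neq O(A)$, so the index above is exactly $2$.

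To conclude, I would restrict the surjection $O(A)\twoheadrightarrow O(A)/\tilde{O}(A)\simeq\mathbb{Z}/2\mathbb{Z}$ to $O^+(A)$. Its kernel is precisely $O^+(A)\cap\tilde{O}(A)=\Gamma$, and its image contains the nontrivial class of $-\operatorname{id}_A$ by the previous step, so the image is the whole group of order $2$. This yields $O^+(A)/\Gamma \simeq \mathbb{Z}/2\mathbb{Z}$. I do not anticipate a substantive obstacle: the only small subtlety is verifying that $-\operatorname{id}_A$ stabilises the chosen connected component $\mathcal{D}$ rather than swapping the two components of $\mathcal{D}_M$, and the projective-identity observation disposes of this at once.
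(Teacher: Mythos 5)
Your proposal is correct and follows essentially the same route as the paper: both arguments reduce to the computation $A^\vee/A \simeq \mathbb{Z}/3\mathbb{Z}$ (the $U$ summands being unimodular), so that $\operatorname{Aut}(A^\vee/A)\simeq\mathbb{Z}/2\mathbb{Z}$ and $O^+(A)/\Gamma$ embeds into it. In fact your write-up is slightly more complete than the paper's, which stops after exhibiting $A^\vee/A=\{0,y_1,y_2\}$ and leaves surjectivity implicit, whereas you supply the explicit witness $-\operatorname{id}_A$, correctly noting that it acts nontrivially on $\mathbb{Z}/3\mathbb{Z}$ yet lies in $O^+(A)$ because it induces the identity on $\mathbb{P}(A\otimes\mathbb{C})$ and hence preserves $\mathcal{D}$.
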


 \begin{proof}
 Let $\{a_1,a_2,b_1,b_2,v_1,v_2\}$ be a system of basis of $A$ of (\ref{LatticeA}).
 Here,
 each of $\{a_1,a_2\}$ and $\{b_1,b_2\}$ gives a system of basis of each direct summand $U$ .
  Also, $\{v_1,v_2\}$ is a system of basis of $A_2(-1)$ satisfying $(v_1\cdot v_1) = (v_2\cdot v_2)=-2$ and $(v_1\cdot v_2)=1.$
 Then, setting
 $
 y_1=\frac{1}{3}v_1 +\frac{2}{3}v_2 \mod A 
 $
 and
 $
 y_2=\frac{2}{3} v_1 +\frac{1}{3}v_2 \mod A,
 $
 the set $A^\vee /A$ is just  $\{0,y_1,y_2\}$.
 This yields $O^+(A)/\Gamma \simeq \mathbb{Z}/2\mathbb{Z}.$
 \end{proof}

 Set 
 $$
 \mathcal{T}=T-(\{t_{18}=0\}\cup \{d_{90}(t)=0\}).
 $$
Here, 
  $d_{90}(t)$ is the following polynomial in $t_4,t_6,t_{10},t_{12} $ and $t_{18}$:
\begin{align}\label{d90}
 d_{90} (t) = & 3125 t_{10}^9 + 11664 t_{10}^3 t_{12}^5 + 151875 t_{10}^6 t_{12} t_{18} + 
  314928 t_{12}^6 t_{18} + 1968300 t_{10}^3 t_{12}^2 t_{18}^2 + 
  4251528 t_{12}^3 t_{18}^3\notag\\
  &+ 14348907 t_{18}^5 + 16200 t_4 t_{10}^5 t_{12}^3  + 
  472392 t_4 t_{10}^2 t_{12}^4 t_{18}  - 273375 t_4 t_{10}^5 t_{18}^2  - 
  5314410 t_4 t_{10}^2 t_{12} t_{18}^3  \notag \\
  &+ 4125  t_4^2 t_{10}^7 t_{12}  + 
  108135 t_4^2 t_{10}^4 t_{12}^2 t_{18}  - 1259712 t_4^2 t_{10} t_{12}^3 t_{18}^2  + 
  4251528 t_4^2 t_{10} t_{18}^4  + 864 t_4^3 t_{10}^3 t_{12}^4  \notag \\ 
  &
   - 
  3525 t_4^3 t_{10}^6 t_{18}  + 23328 t_4^3 t_{12}^5 t_{18} 
  - 
  378108t_4^3 t_{10}^3 t_{12} t_{18}^2  + 1102248 t_4^3 t_{12}^2 t_{18}^3
  + 
  888 t_4^4 t_{10}^5 t_{12}^2   \notag \\
  & + 26568 t_4^4 t_{10}^2 t_{12}^3 t_{18}  + 
  227448 t_4^4 t_{10}^2 t_{18}^3  + 16  t_4^5 t_{10}^7 - 456 t_4^5 t_{10}^4 t_{12} t_{18}  - 
  85536 t_4^5 t_{10} t_{12}^2 t_{18}^2\notag \\
  &  + 16 t_4^6 t_{10}^3 t_{12}^3  + 
  432 t_4^6 t_{12}^4 t_{18}  - 1056 t_4^6 t_{10}^3 t_{18}^2  + 62208 t_4^6t_{12} t_{18}^3  + 
  16 t_4^7 t_{10}^5 t_{12} + 480 t_4^7 t_{10}^2 t_{12}^2 t_{18}  \notag \\
  &  - 16 t_4^8 t_{10}^4 t_{18} - 
  1536 t_4^8 t_{10} t_{12} t_{18}^2  + 1024 t_4^9 t_{18}^3  - 13500 t_6 t_{10}^6 t_{12}^2  - 
  481140 t_6 t_{10}^3 t_{12}^3 t_{18} - 2834352   t_6 t_{12}^4 t_{18}^2 \notag \\ 
  &  - 
  1476225 t_6 t_{10}^3 t_{18}^3 - 19131876 t_6 t_{12} t_{18}^4  - 5625 t_4 t_6 t_{10}^8 - 
  200475  t_4 t_6 t_{10}^5 t_{12} t_{18} - 236196 t_4 t_6 t_{10}^2 t_{12}^2 t_{18}^2 \notag \\
  &  - 
  2592  t_4^2 t_6 t_{10}^4 t_{12}^3 - 69984 t_4^2 t_6 t_{10} t_{12}^4 t_{18}  + 
  422820 t_4^2 t_6  t_{10}^4 t_{18}^2 + 944784  t_4^2 t_6 t_{10} t_{12} t_{18}^3 - 
  3420 t_4^3 t_6 t_{10}^6 t_{12} \notag \\
  & - 107460 t_4^3 t_6 t_{10}^3 t_{12}^2 t_{18}  - 
  174960 t_4^3 t_6t_{12}^3 t_{18}^2  - 1889568  t_4^3 t_6 t_{18}^4 + 
  2772 t_4^4 t_6t_{10}^5 t_{18}  + 314928 t_4^4 t_6 t_{10}^2 t_{12} t_{18}^2 \notag \\
  & - 
  186624 t_4^5 t_6 t_{10} t_{18}^3  - 16 t_4^6 t_6 t_{10}^6  - 
  576 t_4^6 t_6 t_{10}^3 t_{12} t_{18}  - 3456 t_4^6 t_6 t_{12}^2 t_{18}^2  + 
  1152 t_4^7 t_6 t_{10}^2 t_{18}^2  - 5832 t_6^2 t_{10}^3 t_{12}^4  \notag \\
  &- 
  10125 t_6^2 t_{10}^6 t_{18}  - 157464 t_6^2 t_{12}^5 t_{18}  - 
  295245 t_6^2 t_{10}^3 t_{12} t_{18}^2  + 5314410 t_6^2 t_{12}^2 t_{18}^3  - 
  5670 t_4 t_6^2 t_{10}^5 t_{12}^2 \notag \\
  & - 170586 t_4 t_6^2 t_{10}^2 t_{12}^3 t_{18}  + 
  3188646 t_4 t_6^2  t_{10}^2 t_{18}^3 + 2700 t_4^2 t_6^2 t_{10}^7  + 
  101898 t_4^2 t_6^2  t_{10}^4 t_{12} t_{18} + 
  1102248 t_4^2 t_6^2 t_{10} t_{12}^2 t_{18}^2  \notag \\
  & + 216t_4^3 t_6^2 t_{10}^3 t_{12}^3  + 
  5832  t_4^3 t_6^2 t_{12}^4 t_{18}  - 195048  t_4^3 t_6^2 t_{10}^3 t_{18}^2  + 
  216 t_4^4 t_6^2 t_{10}^5 t_{12}  + 6480  t_4^4 t_6^2 t_{10}^2 t_{12}^2 t_{18} \notag \\
  &- 
  216  t_4^5 t_6^2 t_{10}^4 t_{18} - 20736 t_4^5 t_6^2 t_{10} t_{12} t_{18}^2  + 
  20736 t_4^6 t_6^2 t_{18}^3  + 6075 t_6^3 t_{10}^6 t_{12}  + 
  219429  t_6^3 t_{10}^3 t_{12}^2 t_{18} \notag \\
  & + 1338444 t_6^3 t_{12}^3 t_{18}^2  + 
  4251528 t_6^3 t_{18}^4  + 1215 t_4 t_6^3  t_{10}^5 t_{18} - 
  393660 t_4 t_6^3 t_{10}^2 t_{12} t_{18}^2  - 1259712 t_4^2 t_6^3 t_{10} t_{18}^3  \notag \\
&  - 
  216 t_4^3 t_6^3 t_{10}^6  - 7776 t_4^3 t_6^3 t_{10}^3 t_{12} t_{18}  - 
  46656 t_4^3 t_6^3 t_{12}^2 t_{18}^2  + 15552  t_4^4 t_6^3  t_{10}^2 t_{18}^2+ 
  729 t_6^4 t_{10}^3 t_{12}^3  + 19683 t_6^4 t_{12}^4 t_{18}  \notag \\
  &- 
  8748 t_6^4 t_{10}^3 t_{18}^2  - 2834352 t_6^4 t_{12} t_{18}^3  + 
  729 t_4 t_6^4 t_{10}^5 t_{12}  + 21870  t_4 t_6^4 t_{10}^2 t_{12}^2 t_{18} - 
  729 t_4^2 t_6^4 t_{10}^4 t_{18} \notag \\
  &  - 69984 t_4^2 t_6^4 t_{10} t_{12} t_{18}^2  + 
  139968   t_4^3 t_6^4 t_{18}^3 - 729 t_6^5 t_{10}^6  - 
  26244  t_6^5 t_{10}^3 t_{12} t_{18} - 157464 t_6^5 t_{12}^2 t_{18}^2  \notag \\
  &+ 
  52488 t_4 t_6^5 t_{10}^2 t_{18}^2  + 314928 t_6^6 t_{18}^3.
   \end{align}

 \begin{rem}\label{RemDiscr}
 The above $d_{90}(t)$ is calculated by the discriminant of the right hand side of (\ref{S(t)}).
 So, this is the natural counterpart of the famous discriminant $g_2^3 - 27g_3^2 $ of the Weierstrass form (\ref{EllipticWeierstrass}).
 For detail, see \cite{N} Section 1.
 \end{rem}

 \begin{prop}(\cite{N}, Corollary 1.1) \label{CorNSTr}
 For a generic point $[t] \in \mathcal{T}$,
 the N\'eron-Severi lattice  ${\rm NS}(S([t] ))$ is given by the intersection matrix $M=U\oplus E_8 (-1) \oplus E_6 (-1)$
 and the transcendental lattice ${\rm Tr} (S([t]))$ is given by $A$ of (\ref{LatticeA}).
 \end{prop}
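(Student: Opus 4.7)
The approach is to realize $S([t])$ as an elliptic $K3$ fibration over $\mathbb{P}^1$ and read off the N\'eron--Severi lattice from the singular fibers via the Shioda--Tate formalism.

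First I would extract a Weierstrass model. Setting $w=1$ in (\ref{S(t)}), substituting $y = x^2 Y$ and $z = x^3 Z$, dividing by $x^6$, and rescaling $(Y,Z)$ by appropriate powers of $u = 1/x$ to clear denominators, one obtains
\begin{align*}
Z^2 = Y^3 + a(u) Y + b(u), \quad a(u) = t_4 u^4 + t_{10} u^5, \quad b(u) = u^5 + t_6 u^6 + t_{12} u^7 + t_{18} u^8
\end{align*}
over the $u$-line. Since $\deg a \leq 8$ and $\deg b \leq 12$, this is an elliptic $K3$ surface with a zero section.

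Next I would classify the singular fibers using Tate's algorithm. At $u = 0$, the orders $(\mathrm{ord}\, a, \mathrm{ord}\, b, \mathrm{ord}\, \Delta) = (4, 5, 10)$ give a fiber of Kodaira type $II^*$ with Dynkin diagram $E_8$; at $u = \infty$, using $t_{18}\neq 0$, the orders $(3, 4, 8)$ give a fiber of type $IV^*$ with Dynkin diagram $E_6$. The remaining factor of $\Delta(u) = -16(4a^3 + 27 b^2)$ is a sextic in $u$, and by Remark \ref{RemDiscr} its discriminant is, up to a scalar, exactly $d_{90}(t)$, so on $\mathcal{T}$ this sextic has six distinct simple zeros, each producing a nodal fiber $I_1$. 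The Euler numbers total $10 + 8 + 6 = 24$, confirming that $S([t])$ is indeed a $K3$ surface.

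The Shioda--Tate formula now furnishes a trivial sublattice $\mathrm{Triv}(S([t])) = U \oplus E_8(-1) \oplus E_6(-1) = M$ of rank $16$ inside $\mathrm{NS}(S([t]))$, with $\mathrm{NS}/\mathrm{Triv} \simeq \mathrm{MW}(S([t]))$. For generic $t$ the Mordell--Weil group is trivial, whence $\mathrm{NS}(S([t])) = M$. The transcendental lattice is the orthogonal complement of $M$ inside $L_{K3} = U^3 \oplus E_8(-1)^2$; embedding the $U$ and $E_8(-1)$ factors of $M$ into matching factors of $L_{K3}$ reduces the computation to $E_6(-1)^\perp$ inside $E_8(-1)$, which is known to be $A_2(-1)$. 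Hence $\mathrm{Tr}(S([t])) = U^2 \oplus A_2(-1) = A$.

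The most delicate step is the triviality of $\mathrm{MW}(S([t]))$ for generic $t$. Rank zero should follow from a period/transcendence argument: $\omega_{[t]}$ generates a Hodge substructure of the expected rank $6$ in the transcendental direction, forcing $\rho \leq 16$. Absence of torsion can be handled by a direct search for polynomial sections $(Y(u), Z(u))$ of the Weierstrass model, or by specializing to one explicit $t^*$ whose N\'eron--Severi lattice is already known to equal $M$.
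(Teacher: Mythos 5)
The paper itself gives no proof of this proposition: it is imported verbatim from \cite{N} (Corollary 1.1), with the reader referred there for details. That said, the data the paper does exhibit in Section 2 --- the Weierstrass model obtained from $x=1/x_1$, $y=y_1/x_1^4$, $z=z_1/x_1^6$, with a fibre of type $II^*$ at $x_1=0$, a fibre of type $IV^*$ at $x_1=\infty$, and six $I_1$ fibres whose collision locus is $\{d_{90}(t)=0\}$ --- is exactly the input of your argument, and Proposition \ref{PropReflection}(2) (degeneration to $U\oplus E_8(-1)\oplus E_7(-1)$ on $\{t_{18}=0\}$ and to $U\oplus E_8(-1)\oplus E_6(-1)\oplus A_1(-1)$ on $\{d_{90}=0\}$) confirms the Tate/Shioda--Tate reading. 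Your fibre-type computations check out: $(\mathrm{ord}\,a,\mathrm{ord}\,b,\mathrm{ord}\,\Delta)=(4,5,10)$ at $u=0$ and $(3,4,8)$ at $u=\infty$ (the latter using $t_{18}\neq 0$), the Euler numbers sum to $24$, and $E_6(-1)^\perp\subset E_8(-1)$ is indeed $A_2(-1)$, so $M^\perp\cong U^2\oplus A_2(-1)=A$ once the embedding $M\hookrightarrow L_{K3}$ is known to be the standard one (unique up to isometry by Nikulin's criterion --- worth a sentence).

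The one genuine soft spot is your treatment of $\mathrm{MW}(S([t]))$. Torsion is actually the easy half and does not require searching for sections: the torsion subgroup injects into the component group of any fibre, and the $II^*$ fibre has trivial component group. The rank, however, is where your argument as written is circular: saying that $\omega_{[t]}$ ``generates a Hodge substructure of the expected rank $6$'' presupposes $\rho\leq 16$, which is the thing to be proved. The two honest routes are (i) a dimension count: the family is effectively parametrized by the $4$-dimensional base $T$, and a transcendental lattice of rank $\leq 5$ would confine the periods to a domain of dimension $\leq 3$, contradicting local Torelli plus nondegeneracy of the Kodaira--Spencer map (this nondegeneracy still has to be checked); or (ii) exhibiting a single explicit member with $\rho=16$ and invoking the fact that the generic N\'eron--Severi group injects into that of every specialization. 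You gesture at (ii) only for torsion, but it is really the cleanest way to close the rank question as well; as it stands, that step is asserted rather than proved.
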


 We define 
 the period mapping as follows.
 There exists a system of basis $\{\Gamma_1,\cdots,\Gamma_6,\Gamma_7\cdots,\Gamma_{22}\}$ of the $K3$ lattice $L_{K3}$ such that $\{\Gamma_7,\cdots,\Gamma_{22}\}$ is a system of basis of the lattice $M$.
 Letting $\{\Delta_1,\cdots,\Delta_{22}\} $ be the system of  dual basis of it,
  $\{\Delta_1,\cdots,\Delta_6\}$ gives a system of basis of the lattice $A$.
 From Proposition \ref{CorNSTr},
 we can take a point $[t_0]\in\mathcal{T}$
 and an identification
 $\psi_0:H_2(S_0,\mathbb{Z}) \rightarrow L_{K3}$,
 where 
  $S_0=S([t_0])$,
  such that $\{\psi_0^{-1}(\Gamma_7),\cdots,\psi_0^{-1}(\Gamma_{22}) \}$ is a basis of ${\rm NS}(S_0)$.
 If we take a sufficiently small neighborhood $U$ of $[t_0]$ in $\mathcal{T}$,
 we have a topological trivialization $\nu:\{S([t])|[t]\in U \} \rightarrow S_0\times U$.
 Letting $\beta :S_0\times U \rightarrow S_0$ be the projection and setting $r=\beta \circ \nu,$
 $r'_{[t]}=r |_{S([t])}$ gives a $\mathcal{C}^\infty$-isomorphism of complex surfaces
 for $[t]\in U.$
Then, $\psi_{[t]} = \psi_0 \circ (r'_{[t]})_* : H_2(S([t]),\mathbb{Z}) \rightarrow L_{K3}$ gives an isometry, 
 which is called the $S$-marking in \cite{N}.
 By an analytic continuation along an arc in $ \mathcal{T}$,
 we can define the $S$-marking on $\mathcal{T}$.
 The period mapping $\Phi: \mathcal{T}\rightarrow \mathcal{D}$ is defined by
 \begin{align}\label{PerPhi}
  [t] \mapsto \Big(\int_{\psi_{[t]}^{-1} (\Gamma_1)} \omega_{[t]} : \cdots: \int_{\psi_{[t]}^{-1} (\Gamma_6)} \omega_{[t]} \Big).
 \end{align}
In Section 2, we will give a geometric construction of these period integrals.
The mapping (\ref{PerPhi}) is an multivalued mapping.
We can prove that this mapping  induces the biholomorphic isomorphism 
\begin{align}\label{PhiIso}
\bar{\Phi} : T \simeq \mathcal{D}/\Gamma
\end{align}
by a detailed argument of lattice polarized $K3$ surfaces
(see \cite{N} Section 2).

 Let $\mathcal{D}^*$ be the $\mathbb{C}^\times$-bundle of $\mathcal{D}$.
 Due to the Kneser condition of the lattice $A$, 
 it follows that
 the group $\Gamma = \tilde{O}^+ (A)$ is generated by the reflections of $(-2)$-vectors
 and ${\rm Char}(\Gamma)={\rm Hom}(\Gamma, \mathbb{C}^\times)$ is equal to $\{{\rm id}, {\rm det}\}$.
 Moreover, we can investigate the orbifold structure of $[\mathcal{D}/\Gamma]$ as follows.

\begin{prop}(\cite{N}, Section 5)\label{PropReflection}
(1) 
 There exist two images of  reflection hyperplanes of $(-2)$-vectors  of the lattice $A$ under  the natural surjection $\mathcal{D} \rightarrow \mathcal{D}/\Gamma$.

(2) Under the isomorphism $\bar{\Phi}$ of (\ref{PhiIso}), the branch loci of the orbifold $[\mathcal{D}/\Gamma ]$ is corresponding to 
the union of the locus $\{t_{18}=0\}$ and $\{d_{90}(t) =0\}$.
Each locus is corresponding to each orbit of the reflection hyperplanes.
On $\{t_{18}=0\}$ ($\{d_{90} (t) =0\}$, resp.), the N\'eron-Severi lattice of $S([t])$ is degenerated to $U\oplus E_8(-1) \oplus E_7(-1)$
($U\oplus E_8(-1) \oplus E_6(-1) \oplus A_1(-1)$, resp.).
\end{prop}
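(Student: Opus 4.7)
My plan is to prove the two parts in parallel by matching a lattice-theoretic classification of $\Gamma$-orbits of $(-2)$-vectors in $A$ with two types of degeneration of the family (\ref{S(t)}).

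For part (1), since $A = U \oplus U \oplus A_2(-1)$ satisfies the Kneser condition, I would invoke Nikulin's theorem to conclude that $\Gamma = \tilde{O}^+(A)$ is generated by reflections in $(-2)$-vectors, so the images of the reflection hyperplanes in $\mathcal{D}/\Gamma$ are in bijection with $\Gamma$-orbits of primitive $(-2)$-vectors in $A$. I would exhibit two candidate representatives, one coming from a simple root of the $A_2(-1)$ summand and one of the form $e-f$ inside one of the $U$ summands (with $e,f$ a standard hyperbolic basis), and show that they lie in distinct $\Gamma$-orbits by computing the invariant given by the isometry class of $\delta^\perp$ together with its embedding in $A$. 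A count via Nikulin's orbit theorems for indefinite lattices of rank $\ge 3$ would then show that these two orbits exhaust all primitive $(-2)$-vectors.

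For part (2) I would analyze each component of the branch locus geometrically and then match. On $\{t_{18}=0\}$, the specialization of (\ref{S(t)}) makes an irreducible component of a singular fibre of the natural elliptic fibration split off, producing a new algebraic $(-2)$-class; this enhances ${\rm NS}(S([t]))$ from $U\oplus E_8(-1)\oplus E_6(-1)$ to $U\oplus E_8(-1)\oplus E_7(-1)$, completing the $E_6$-diagram to an $E_7$-diagram. On $\{d_{90}(t)=0\}$, Remark \ref{RemDiscr} identifies $d_{90}$ as the discriminant of the Weierstrass cubic (in $y$) of (\ref{S(t)}), so a generic member of this locus acquires an additional nodal singularity away from the existing $E_6$-configuration; the exceptional divisor of its minimal resolution contributes a new summand $A_1(-1)$ to the N\'eron-Severi lattice, yielding $U\oplus E_8(-1)\oplus E_6(-1)\oplus A_1(-1)$. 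For each locus, I would compute the transcendental lattice of the degenerate surface and identify it with $\delta^\perp \cap A$ for the appropriate representative $\delta$; the Torelli theorem together with the isomorphism (\ref{PhiIso}) then guarantees that each locus in $\mathcal{T}$ pulls back to the image in $\mathcal{D}/\Gamma$ of the orbit of the corresponding reflection hyperplane.

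The main obstacle, as I see it, is the orbit count of part (1). Every primitive $(-2)$-vector in $A$ has trivial image in $A^\vee/A \simeq \mathbb{Z}/3\mathbb{Z}$, so the coarsest discriminant-group invariant does not separate the two orbits, and one must track finer information about how $\mathbb{Z}\delta$ sits inside $A$. Moreover, because $O^+(A)/\Gamma \simeq \mathbb{Z}/2\mathbb{Z}$ by Lemma \ref{LemStableG}, an $O^+(A)$-orbit could a priori either remain a single orbit or split into two under the index-two subgroup $\Gamma$, and distinguishing these possibilities is the subtlest step.
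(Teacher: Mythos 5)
The paper does not actually prove this proposition---it is imported verbatim from \cite{N}, Section 5---so there is no internal argument to compare against; I can only judge your plan on its merits, and part (1) contains a genuine error that would derail it. Since $A=U\oplus U\oplus A_2(-1)$ contains $U\oplus U$ as an orthogonal direct summand, Eichler's criterion applies: the $\tilde{O}^+(A)$-orbit of a primitive vector $\delta$ is \emph{completely} determined by its norm and by $\delta/\mathrm{div}(\delta)\bmod A$ in $A^\vee/A$. For a primitive $(-2)$-vector, $\mathrm{div}(\delta)$ divides $(\delta,\delta)=-2$, and since $A^\vee/A\simeq\mathbb{Z}/3$ has no $2$-torsion this forces $\mathrm{div}(\delta)=1$ and $\delta_*=0$. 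Hence all primitive $(-2)$-vectors form a single $\Gamma$-orbit; in particular your two candidates (a root of $A_2(-1)$ and $e-f$ in a copy of $U$) are $\Gamma$-equivalent, and the ``finer information about how $\mathbb{Z}\delta$ sits inside $A$'' that you hope will separate them does not exist---the obstacle you flag at the end is not merely subtle, it is insurmountable. Concretely, the two perpendicular lattices $\langle 2\rangle\oplus U\oplus A_2(-1)$ and $U\oplus U\oplus\langle -6\rangle$ lie in the same genus and are unique in it, so even your proposed invariant coincides.

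The second component of the branch divisor therefore has to come from elsewhere: from the hyperplanes $\delta^\perp$ with $\delta^2=-6$ and $\mathrm{div}(\delta)=3$ (e.g.\ $\delta=v_1-v_2$ in $A_2(-1)$). For such $\delta$ the reflection $\sigma_\delta$ acts as $-1$ on $A^\vee/A$, so $\sigma_\delta\notin\Gamma$, but $-\sigma_\delta\in\tilde{O}^+(A)=\Gamma$ and acts on $\mathcal{D}$ as a reflection fixing $\delta^\perp\cap\mathcal{D}$; these vectors form the second orbit, and the statement's phrase ``$(-2)$-vectors'' must be read loosely (they are $(-2)$-vectors only after rescaling, i.e.\ the norm $-2/3$ elements of $A^\vee$). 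This corrected classification is also what makes the matching in part (2) come out consistently: on $\{t_{18}=0\}$ the transcendental lattice is $U\oplus U\oplus\langle -2\rangle$ (the Clingher--Doran/Siegel case), which is $\delta^\perp$ for $\delta^2=-6$, $\mathrm{div}(\delta)=3$, while on $\{d_{90}=0\}$ it is $U\oplus U\oplus\langle -6\rangle=\delta^\perp$ for a $(-2)$-vector; two orbits of $(-2)$-vectors could never reproduce both complements. Your geometric account of the N\'eron--Severi degenerations themselves (the $E_6\to E_7$ enhancement at $t_{18}=0$, and the extra node contributing $A_1(-1)$ at $d_{90}=0$, consistent with Remark \ref{RemDiscr} and Proposition \ref{PropS(t)CD}) is sound and is the part of the proposal worth keeping.
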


 \begin{df}\label{ModularIV}
 If a holomorphic function $f:\mathcal{D}^* \rightarrow \mathbb{C}$ 
given by $Z\mapsto f(Z)$ satisfies the conditions
\begin{itemize}

\item[(i)] $f(\lambda Z) = \lambda^{-k} f(Z) \quad (\text{for all } \lambda \in \mathbb{C}^*)$,

\item[(ii)] $f(\gamma Z) = \chi(\gamma) f(Z) \quad (\text{for all } \gamma \in \Gamma),$

\end{itemize}
where $k\in \mathbb{Z}$ and $\chi \in {\rm Char}(\Gamma)$,
then $f$ is called a modular form of weight $k$ and character $\chi$ for the group $\Gamma$.
Let $\mathcal{A}(\Gamma)$ be the ring of such modular forms.
\end{df}

\begin{thm}\label{ThmModular} (\cite{N}, Theorem 5.1)
(1)  
The ring  $\mathcal{A}(\Gamma,{\rm id})$ of modular forms of  character {\rm id} is isomorphic to
the ring  $\mathbb{C}[t_4,t_6,t_{10},t_{12},t_{18}]$.
Namely, 
via the inverse of the period mapping $\bar{\Phi}$ of (\ref{PhiIso}),
the correspondence $Z\mapsto t_k(Z)$ gives a modular form of weight $k$ and character ${\rm id}$.

(2) There is a modular form $s_{54}$  of weight $54$ and character ${\rm det}$.
Here, $s_{54}$ is given by $s_9 s_{45}$, where
$s_9$ and $s_{45}$ are holomorphic functions on $\mathcal{D}^*$ such that
\begin{align*}
s_9^2 =t_{18},  \quad \quad
s_{45}^2 =d_{90}(t).
\end{align*}
These relations determine the structure of the ring $\mathcal{A}(\Gamma)$.
\end{thm}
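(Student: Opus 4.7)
The plan is to transport the weighted-projective structure of $T \subset \mathbb{P}(4,6,10,12,18)$ across $\bar\Phi$ of (\ref{PhiIso}), and then to use Proposition \ref{PropReflection} to build the character-$\det$ form as a product of square roots of the two branch equations.

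For part (1), I would first lift $\bar\Phi$ to a $\mathbb{C}^*$-equivariant identification $T^* \simeq \mathcal{D}^*/\Gamma$. The transformation law (\ref{omegalambda}) of the holomorphic $2$-form forces the lifted period mapping to satisfy $\tilde\Phi(\lambda\cdot t) = \lambda^{-1}\tilde\Phi(t)$. Combined with $t_k(\lambda\cdot t) = \lambda^k t_k(t)$, this shows that each $t_k$, regarded as a holomorphic function on $\mathcal{D}^*$, satisfies $t_k(\lambda Z) = \lambda^{-k} t_k(Z)$, i.e.\ condition (i) of Definition \ref{ModularIV} with weight $k$; condition (ii) with trivial character is immediate from the well-definedness of $\bar\Phi$ on the quotient. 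Conversely, any $f \in \mathcal{A}(\Gamma,\mathrm{id})$ of weight $k$ descends via $\bar\Phi^{-1}$ to a weighted-homogeneous holomorphic function on $T^* \subset \mathbb{C}^5 \setminus \{0\}$, and since $T^*$ is the complement of the codimension-$3$ locus $\{t_{10}=t_{12}=t_{18}=0\}$, a Hartogs extension produces a polynomial in $t_4,\dots,t_{18}$ of total weight $k$, yielding the desired ring isomorphism.

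For part (2), by Proposition \ref{PropReflection} the branch divisor of the projection $\mathcal{D}\to\mathcal{D}/\Gamma$ consists of two $\Gamma$-orbits of reflection hyperplanes, with images $\{t_{18}=0\}$ and $\{d_{90}=0\}$. A local analysis near a reflection hyperplane with transverse coordinate $u$ shows that every $\Gamma$-invariant function depends only on $u^2$, so $t_{18}$ and $d_{90}$, lifted to $\mathcal{D}^*$, each vanish to order exactly $2$ along every hyperplane in their respective orbits. This evenness, together with the simple-connectedness of $\mathcal{D}$, allows the construction of single-valued holomorphic square roots $s_9$ and $s_{45}$ of weights $9$ and $45$ on $\mathcal{D}^*$.

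To determine the character of $s_{54}:= s_9 s_{45}$, I fix a reflection $\gamma$ across an orbit-$1$ hyperplane $H$. Since $s_9$ vanishes to first order on $H$, one obtains $\gamma^* s_9 = -s_9$; since $s_{45}|_H$ is nonzero and pointwise $\gamma$-fixed, one obtains $\gamma^* s_{45} = +s_{45}$, hence $\gamma^* s_{54} = -s_{54}$. The symmetric argument at an orbit-$2$ reflection gives the same sign. As $\Gamma$ is generated by reflections and $\det$ is the unique nontrivial character, $s_{54}$ is a modular form of weight $54$ and character $\det$, and the ring $\mathcal{A}(\Gamma)$ is obtained from $\mathbb{C}[t_4,\dots,t_{18}]$ by adjoining $s_{54}$ subject to the quadratic relation $s_{54}^2 = t_{18}\, d_{90}$. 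The main obstacle I anticipate is the global single-valuedness of $s_9$ and $s_{45}$: local square roots exist by the evenness of vanishing, but gluing them globally requires trivial monodromy around the branch divisors---an obstacle that the subsequent theta-function representation handles concretely by pinning down each square root as an explicit automorphic object.
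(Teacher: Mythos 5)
The paper itself offers no proof of this theorem: it is imported verbatim from \cite{N}, Theorem 5.1, so there is no in-paper argument to measure yours against line by line. Your reconstruction follows the strategy one would expect (and the one underlying the predecessors \cite{SI}, \cite{NHilb}, \cite{CD}): transport the weighted $\mathbb{C}^*$-structure of $T\subset\mathbb{P}(4,6,10,12,18)$ through the period isomorphism (\ref{PhiIso}) for part (1), and use the two orbits of reflection hyperplanes from Proposition \ref{PropReflection} for part (2). Part (1) is essentially complete as you present it: the scaling law (\ref{omegalambda}) does give condition (i) of Definition \ref{ModularIV}, and since the removed locus $\{t_{10}=t_{12}=t_{18}=0\}$ has codimension $3$ in $\mathbb{C}^5$, the Hartogs-plus-homogeneity argument legitimately forces any weight-$k$ invariant form to be a weighted polynomial.

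In part (2) three points need attention. First, deferring the single-valuedness of $s_9$ and $s_{45}$ to ``the subsequent theta-function representation'' would be circular in this paper's logic, since Theorem \ref{ThmThetaExpression} is deduced \emph{from} Theorem \ref{ThmModular}; but your own evenness argument already suffices, because the monodromy of a square root is a homomorphism on $H_1$ of the divisor complement in $\mathcal{D}^*$, which is generated by meridians of the branch divisors (even vanishing order, so trivial monodromy) and the fiber class of the $\mathbb{C}^\times$-bundle (where $t_{18}(\lambda Z)=\lambda^{-18}t_{18}(Z)$ admits the single-valued root $\lambda^{-9}$). Second, the assertion that the relations ``determine the structure of $\mathcal{A}(\Gamma)$'' still needs the divisibility step $\mathcal{A}(\Gamma,\det)=s_{54}\cdot\mathcal{A}(\Gamma,\mathrm{id})$: a $\det$-form is anti-invariant under every reflection, hence vanishes on every reflection hyperplane, hence is divisible by $s_{54}$, whose divisor is the reduced sum of all such hyperplanes. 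Third, a consistency check you should not skip: your computation makes $s_9$ an eigenfunction with sign $-1$ on orbit-$1$ reflections and $+1$ on orbit-$2$ reflections, i.e.\ a character of $\Gamma$ that is neither $\mathrm{id}$ nor $\det$, in tension with the paper's claim that ${\rm Char}(\Gamma)=\{{\rm id},\det\}$. Your conclusion for $s_{54}$ survives because you verify $\gamma^*s_{54}=\det(\gamma)s_{54}$ directly on a generating set of reflections, but you should not invoke ``$\det$ is the unique nontrivial character'' as a premise, and the discrepancy should be resolved before any intermediate claim about $s_9$ or $s_{45}$ alone is trusted.
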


By the way,
Clingher-Doran \cite{CD} studied a family of $K3$ surface
\begin{align}\label{CDFamily}
S_{CD}(\alpha:\beta:\gamma:\delta): 
z^2 =y^3+ (-3\alpha x^4-\gamma x^5) y+ (x^5-2\beta x^6 +\delta x^7) 
\end{align}
 parametrized by $(\alpha:\beta:\gamma:\delta)\in \mathbb{P}(2,3,5,6)-\{\gamma=\delta=0\}$. 
This family is a subfamily of our family $\{S([t])\}$ as follows.

\begin{prop} (\cite{N}, Proposition 2.3) \label{PropS(t)CD}
 The family  $\{S_{CD} (\alpha,\beta,\gamma,\delta) \}$  is equal to the subfamily  of $\{S([t])\}$  given by the condition $t_{18}=0.$
 Especially, the embedding $(\alpha:\beta:\gamma:\delta) \hookrightarrow (t_4:t_6:t_{10}:t_{12}:t_{18})$
 of parameters 
  is given by 
 $$t_4=-3\alpha,\quad  t_6=-2\beta, \quad  t_{10}=-\gamma, \quad  t_{12}=\delta, \quad t_{18}=0.$$  
 \end{prop}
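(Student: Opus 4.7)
The plan is to exhibit an explicit birational coordinate change realizing the claimed identification, and then to promote it to an isomorphism of $K3$ surfaces by minimality. First I would pass to the affine chart $\{w \neq 0\}$ of $\mathbb{P}(6,14,21,1)$ by setting $w = 1$ in (\ref{S(t)}). Under the constraint $t_{18} = 0$, this gives
\[
z^2 = y^3 + (t_4 x^4 + t_{10} x^3)\, y + (x^7 + t_6 x^6 + t_{12} x^5),
\]
which I would view as an elliptic fibration over $\mathbb{P}^1_x$. The Clingher--Doran equation (\ref{CDFamily}) is likewise an elliptic fibration over $\mathbb{P}^1_x$, but the $x$-degrees of its $y$-coefficient are $\{4,5\}$ while those of $S([t])|_{t_{18}=0}$ are $\{3,4\}$; similarly the constant terms have $x$-degrees $\{5,6,7\}$ in both cases, but occur in opposite order. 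This mirror-reversal pattern suggests that the two fibrations differ by the base involution $x \leftrightarrow x^{-1}$.

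Accordingly, I would apply to (\ref{CDFamily}) the birational change of coordinates $x = 1/x_1$, $y = y_1/x_1^4$, $z = z_1/x_1^6$ -- which is the standard Weierstrass twist accompanying $x \mapsto x^{-1}$ for a $K3$ fibration with $\deg f \le 8$ and $\deg g \le 12$ -- and then multiply through by $x_1^{12}$ to clear denominators. A direct computation yields
\[
z_1^2 = y_1^3 + (-3\alpha x_1^4 - \gamma x_1^3)\, y_1 + (x_1^7 - 2\beta x_1^6 + \delta x_1^5),
\]
which is exactly $S([t])|_{t_{18}=0}$ (in the coordinates $(x_1,y_1,z_1)$) under the identifications $t_4 = -3\alpha$, $t_6 = -2\beta$, $t_{10} = -\gamma$, $t_{12} = \delta$ stated in the proposition. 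Reading off the match monomial-by-monomial gives an injective map of parameter spaces $\mathbb{P}(2,3,5,6) \hookrightarrow \mathbb{P}(4,6,10,12,18)$; the doubling of weights under the identification is absorbed by the projective equivalence.

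Finally, both $S([t])|_{t_{18}=0}$ (by Proposition \ref{CorNSTr}) and $S_{CD}(\alpha{:}\beta{:}\gamma{:}\delta)$ (by \cite{CD}) are smooth minimal $K3$ surfaces, obtained as resolutions of their respective Weierstrass fibrations, so any birational equivalence between them is an isomorphism. Hence the substitution above lifts to a genuine isomorphism of $K3$ surfaces that depends algebraically on the parameters, producing the desired embedding of families. The only nontrivial ingredient is spotting the correct substitution $(x,y,z) = (x_1^{-1}, x_1^{-4} y_1, x_1^{-6} z_1)$; once this is in hand, the verification reduces to a routine polynomial identity together with the standard $K3$ minimality argument.
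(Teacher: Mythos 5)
Your proof is correct, and it uses exactly the coordinate change $x=1/x_1$, $y=y_1/x_1^4$, $z=z_1/x_1^6$ that the paper itself employs in Section~2 to put $S(t)$ into the fibration form (\ref{S(t)1}); the paper states this proposition without proof, citing \cite{N}, and your verification is the natural one. The polynomial identity, the weight-doubling check on $\mathbb{P}(2,3,5,6)\hookrightarrow\mathbb{P}(4,6,10,12,18)$, and the promotion from birational equivalence to isomorphism via minimality of $K3$ surfaces are all in order.
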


The inverse period mapping for the Clingher-Doran family is closely related to Siegel modular forms of degree $2$ (see  Proposition \ref{PropCDIgusa}).
We note that this family is a simple extension of the famous family of \cite{SI}.
Also, this family contains the family of \cite{NHilb},
which is closely related to the Hilbert modular forms and the icosahedral invariants (for detail, see \cite{NS} Theorem 5.4).

\section{Geometric construction of periods}

In this section, 
we will give an explicit geometric  construction of  $2$-cycles  on a reference surface $S_0$ in Section 1. 
Together with analytic continuation,
this also gives a construction of the period integrals  of (\ref{PerPhi}).
Our construction is based on the geometry of elliptic surfaces,
 like the argument of  \cite{NS} Section 8.

First, let $[t_0] = (t_4: t_6: t_{10}: t_{12}: t_{18})=(-12: -3: -5: 21:13)\in \mathcal{T}$.
Let us give a geometric  construction of $2$-cycles 
on
 a reference surface $S_0=S([t_0])$.
By putting $w=1$ and performing the birational transformation
$$
x=\frac{1}{x_1}, \quad y=\frac{y_1}{x_1^4}, \quad z=\frac{z_1}{x_1^6} ,
$$
the equation (\ref{S(t)}) of our $K3$ surface  is transformed to
\begin{eqnarray}\label{S(t)1}
&&
 z_1^2=y_1^3+(-12x_1^4-5 x_1^5)y_1+(x_1^5-4x_1^6+21x_1^7+13x_1^8).
\end{eqnarray}
The equation (\ref{S(t)1}) defines an elliptic fibration $\pi_1:(x_1,y_1,z_1)\mapsto x_1$.
On $x_1=0$ ($x_1=\infty$, resp.), we have a singular fibre of $\pi_1$ of Kodaira type $II^*$ ($IV^*$, resp.).  
Set $\alpha_0=0$ and $\alpha_{\infty} =\infty$.
Furthermore, 
we have other $6$ singular fibres $\pi_1^{-1} (\alpha_i)$ $(i=1,\cdots,6)$ of Kodaira type $I_1$,
where
$$
(\alpha_1,\alpha_2,\alpha_3,\alpha_4,\alpha_5,\alpha_6) 
\approx
 (-1.84  , -1.65  , -0.43 , 
-0.10  , 0.05  , 0.84  ).
$$
We call $\alpha_j\in \mathbb{R}\cup \{\infty\}$ $(j=0,1,\cdots,6,\infty)$ the critical points.
Take
$b_0=i \in x_1$-plane.
We call this point the base point.
The fibre $\pi_1^{-1}(b_0)$ is given by the elliptic curve
$$
z_1^2=  y_1^3 - (12 + 5 i) y_1 + (16 - 20 i).
$$
This defines a double covering of the $y_1$-sphere with $4$ branch points 
$$
c_1=-4 ,\quad  c_2\approx 0.418861 - 1.58114 i,  \quad  c_3\approx 3.58114 + 1.58114 i, \quad c_\infty=\infty. 
$$
Take a basis $\{\gamma_1,\gamma_2\}$ of $H_1(\pi_1^{-1}(b_0),\mathbb{Z})$ as in Figure 1.
The intersection number $\gamma_1\cdot \gamma_2$  is equal to $1$.
\begin{figure}[h]
\center
\includegraphics[scale=0.7]{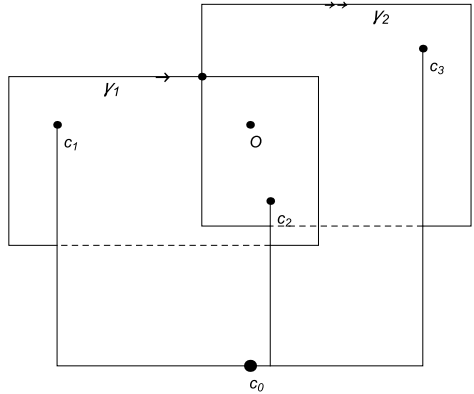}
\caption{$1$-cycles $\gamma_1$ and $\gamma_2$ on the elliptic curve $\pi^{-1}(b_0)$}
\end{figure}
Let $\delta_i$ $(i=0,1,\cdots,6,\infty)$ be a closed oriented loop in $x_1$-plane 
starting at the
base point $b_0$ 
and going around $\alpha_i$ in the positive direction.
We call them  circuits (see Figure 2).
\begin{figure}[h]
\center
\includegraphics[scale=0.8]{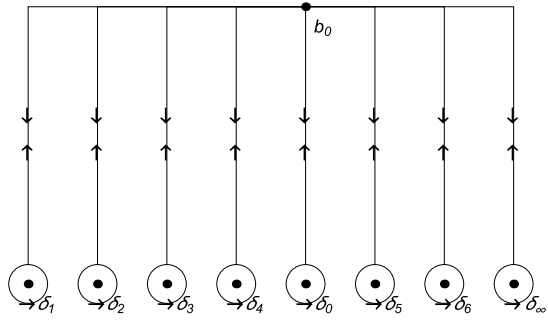}
\caption{Circuits $\delta_j$ going around critical points }
\end{figure}
The circuit $\delta_i$  induces a monodromy transformation of the system $\{\gamma_1,\gamma_2\}$. 
Let us denote it by the matrix $M_i$ as a left action. We call them local monodromies.

\begin{lem}\label{LemMonodromy}
The local monodromies are given by Table 2.
\end{lem}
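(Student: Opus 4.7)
The plan is to compute each local monodromy $M_i$ by combining Picard--Lefschetz theory for the six nodal fibers with Kodaira's classification for the $II^*$ and $IV^*$ fibers, tied together by the global monodromy relation on the base $\mathbb{P}^1$. Since the fiber $\pi_1^{-1}(b_0)$ is presented as a double cover of the $y_1$-sphere branched at $c_1,c_2,c_3,c_\infty$, the basis $\{\gamma_1,\gamma_2\}$ of Figure~1 consists of lifts of loops around two disjoint pairs of branch points. As $x_1$ varies in the base, the three finite branch points $c_j(x_1)$ move as the roots of the cubic in $y_1$ on the right-hand side of (\ref{S(t)1}), while $c_\infty$ stays at $\infty$.

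\emph{Step 1 ($I_1$ fibers).} At each $\alpha_i$ for $i=1,\ldots,6$, the discriminant of this cubic has a simple zero, so exactly two of the four branch points collide as $x_1 \to \alpha_i$. Following the branch points numerically along a path from $b_0$ to $\alpha_i$ inside the circuit $\delta_i$, I would identify the colliding pair and express the loop on the double cover encircling them as a vanishing cycle $v_i = p_i\gamma_1+q_i\gamma_2 \in H_1(\pi_1^{-1}(b_0),\mathbb{Z})$. The Picard--Lefschetz formula then yields $M_i \in SL_2(\mathbb{Z})$ as the Dehn twist along $v_i$.

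\emph{Step 2 ($II^*$ and $IV^*$ fibers).} Kodaira's classification of singular fibers asserts that $M_0$ is conjugate in $SL_2(\mathbb{Z})$ to an element of order six with trace $1$, and $M_\infty$ to an element of order three with trace $-1$. To pin down the precise matrices in the basis of Figure~1, I would apply the same branch-point tracking to the circuits $\delta_0$ and $\delta_\infty$, and cross-check via the global monodromy relation
\[
M_\infty\, M_6 M_5 M_4 M_3 M_2 M_1\, M_0 = I,
\]
which holds because the concatenation $\delta_\infty \delta_6 \cdots \delta_1 \delta_0$ (in the cyclic order dictated by Figure~2) is null-homotopic in $\mathbb{P}^1$ with all critical points removed.

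The main obstacle will be Step~1: correctly tracking the collision pattern of the branch points and fixing the signs and integer coefficients of each $v_i$ in the basis $\{\gamma_1,\gamma_2\}$ of Figure~1, which depends sensitively on the chosen system of branch cuts on the $y_1$-sphere. Because all six real critical points $\alpha_i$ lie on the real axis and the base point $b_0 = i$ is comfortably separated from them, the numerical continuation of the roots $c_j(x_1)$ along suitable piecewise-linear paths should be unambiguous. Once Step~1 is completed, Step~2 furnishes a strong built-in consistency check via the global relation together with the Kodaira conjugacy-class constraints, and the resulting matrices should reproduce Table~2.
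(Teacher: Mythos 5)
The paper itself offers no proof of Lemma \ref{LemMonodromy}: the table is asserted as the outcome of a computation, with only the remark that the construction follows the elliptic-surface argument of \cite{NS} Section 8. Your outline is exactly the standard method one would use to produce it --- Picard--Lefschetz twists along vanishing cycles obtained by tracking the collisions of the branch points $c_j(x_1)$ of the double cover, Kodaira's normal forms to constrain the conjugacy classes at the $II^*$ and $IV^*$ fibres, and the global product relation as a consistency check --- so in spirit it coincides with what the authors must have done. Two caveats. First, the proposal is a plan rather than an executed computation: the entire content of the lemma is the list of eight explicit matrices, and until the branch-point tracking in your Step 1 is actually carried out (including the braid induced on $\{c_1,c_2,c_3,c_\infty\}$ by the full circuits $\delta_0$ and $\delta_\infty$, where more than a single pair of branch points collides), nothing in Table 2 has been verified. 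Second, and concretely, the global relation you wrote down is wrong as stated: since $\alpha_0=0$ lies between $\alpha_4\approx -0.10$ and $\alpha_5\approx 0.05$ on the real axis, the circuit $\delta_0$ must be interleaved accordingly, and the relation satisfied by the matrices of Table 2 is
\begin{align*}
M_1 M_2 M_3 M_4 M_0 M_5 M_6 M_\infty = I,
\end{align*}
which one checks directly, whereas the product $M_\infty M_6 M_5 M_4 M_3 M_2 M_1 M_0$ with $M_0$ placed at the end is not the identity. You hedged by deferring the order to Figure 2, but since you intend to use this relation as your main built-in consistency check, getting the cyclic order right is essential; with the corrected order the check does confirm Table 2.
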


\begin{table}[h]
\center
{\small
\begin{tabular}{ccccccccccc}
\hline
Points&$\alpha_1$&$\alpha_2$&$\alpha_3$&$\alpha_4$&$\alpha_0=0$
&$\alpha_5$&$\alpha_6$&$\alpha_{\infty} =\infty$\\ \hline
$M_i$ &$\begin{pmatrix}0&-1\cr 1&2\end{pmatrix}$&
$\begin{pmatrix}1&-1\cr 0&1\end{pmatrix}$&
$\begin{pmatrix}1&-1\cr 0&1\end{pmatrix}$&
$\begin{pmatrix}1&-1\cr 0&1\end{pmatrix}$&
$\begin{pmatrix}0&1\cr -1&1\end{pmatrix}$&
$\begin{pmatrix}1&0\cr 1&1\end{pmatrix}$&
$\begin{pmatrix}1&0\cr 1&1\end{pmatrix}$&
$\begin{pmatrix}0&1\cr -1&-1\end{pmatrix}$\\ \hline
Type&$I_1$&$I_1$&$I_1$&$I_1$&$II^{\ast}$&$I_1$&$I_1$&$IV^{\ast}$\\ \hline
V. Cycle&$\gamma_1+\gamma_2$&$\gamma_2$&$\gamma_2$&$\gamma_2$&-&$\gamma_1$&$\gamma_1$&-\\ \hline
\hline 
\end{tabular}
}
\caption{Local monodromies around $\alpha_i$}
\end{table}

\begin{rem}
In Table 2,
``V. Cycle'' means the vanishing cycle of the elliptic fibre at the critical point $\alpha_i$.
\end{rem}

Let $\delta$ be 
an oriented arc in the $x_1$-plane
 starting at
 the base point $b_0$
 and let $\gamma\in H_1(\pi_1^{-1} (b_0),\mathbb{Z})$. 
We obtain a $2$-chain $\Gamma(\delta,\gamma)$ on the reference surface (\ref{S(t)1})
obtained by the continuation of $\gamma$ along $\delta$.
We define the orientation of  $\Gamma(\delta,\gamma)$ by the ordered pair of $\delta$ and $\gamma$.
If $\delta$ is a loop returning back to the starting cycle $\gamma$,  
then $\Gamma(\delta,\gamma)$ becomes to be a $2$-cycle on the reference surface.
By using the local monodromies in Lemma \ref{LemMonodromy},
we have the following $2$-cycles.
\begin{eqnarray*}
&&
G_0^{\ast} =\Gamma(\delta_1, \gamma_2)+ \Gamma(\delta_2, \gamma_1+2\gamma_2)+ \Gamma(-\delta_6, \gamma_1+\gamma_2),\\
&&
G_1^{\ast} =\Gamma(\delta_2, \gamma_1)+\Gamma(-\delta_3, \gamma_1-\gamma_2),
\\&&
G_2^{\ast} =\Gamma(\delta_3, \gamma_1)+ \Gamma(-\delta_4, \gamma_1-\gamma_2),
\\&&
G_3^{\ast} =\Gamma(\delta_4, \gamma_1)+ \Gamma(\delta_0, \gamma_1-\gamma_2),
\\&&
G_4^{\ast} =\Gamma(\delta_0, \gamma_2)+ \Gamma(\delta_5, -\gamma_1+\gamma_2),
\\&&
G_5^{\ast} =\Gamma(\delta_5, \gamma_2)+ \Gamma(-\delta_6, -\gamma_1+\gamma_2).
\end{eqnarray*}
They are illustrated in Figure 3.
Here, the line
$l_i=\{(\alpha_i, -it) | t\geq 0\}$
is called the cut line.
We note that
the circuit $\delta_i$ meet the cut line $l_i$ just  once.
The system $\{ G_0^{\ast}, G_1^{\ast}, \cdots , G_5^{\ast}\}$ has the intersection matrix
\begin{eqnarray*}
&&
\begin{pmatrix}
-2& -1& 0& 0& 0& -1\cr -1&-2& 2& 0& 0& 0\cr 0&2& -2& 1& 0& 0\cr 
0&0& 1& 0& 0& 0\cr 0& 0& 0& 0& 0& -1\cr -1& 0& 0& 0& -1& -2
\end{pmatrix}
.
\end{eqnarray*}
\begin{figure}[h]
\center
\includegraphics[scale=1]{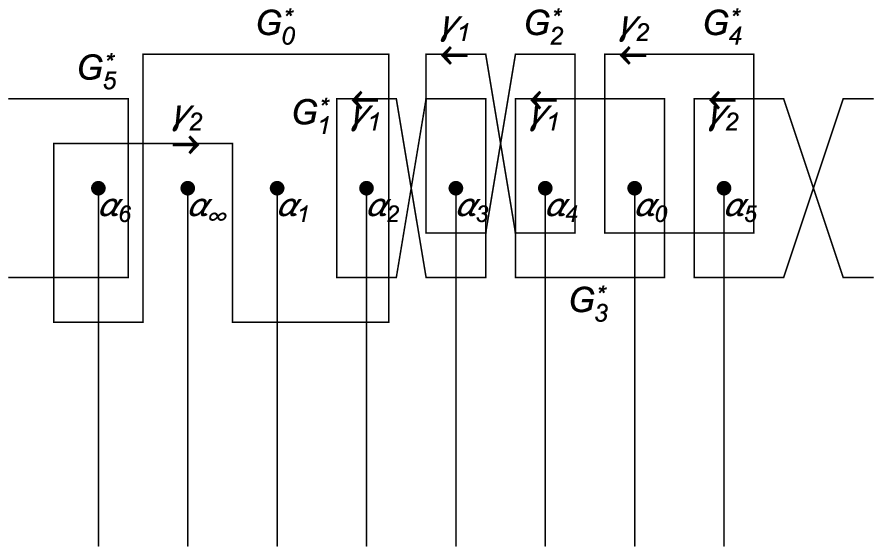}
\caption{2-cycles $G_i^{\ast}$ }
\end{figure}
We set
\begin{eqnarray*}
&&
T_n=
\begin{pmatrix}
-1& 0& 0& 0& 1& 0\cr 0& 1& 0& -2& 0& 0\cr 0&0& 1& 1& 0& 0\cr 
0& 0&  0& 1& 0& 0\cr 0&0&0& 0& 1& 0\cr 0& 0& 0& 0& 1& -1
  \end{pmatrix}
\end{eqnarray*}
and
$$
{}^t (H_0^*,H_1^*,H_2^*,H_3^*,H_4^*,H_5^*) =T_n  {}^t (G_0^*,G_1^*,G_2^*,G_3^*,G_4^*,G_5^*).
$$
We can see that
the intersection matrix of the system $\{H_0^*,H_1^*,\cdots,H_5^*\}$ is equal to
$
A_2(-1)\oplus U\oplus U.
$
Now,
we remark that 
we can take a system of basis $\{C_1,\cdots,C_{16}\}$ of ${\rm NS}(S_0)$
such that each of $\{C_j\}_{j}$ are contained in $\pi_1^{-1}(\alpha_0)\cup \pi_1^{-1}(\alpha_\infty) $. 
Note that $H_0^*,\cdots , H_5^*$ do not meet the singular fibres of $\pi_1$.
Therefore, we have the following result.

\begin{prop}\label{PropHSystem}
The set $\{H_0^*,\cdots, H_5^*\}$ of  $2$-cycles  gives a basis of ${\rm Tr}(S_0)$.
\end{prop}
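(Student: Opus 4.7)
My plan is to confirm first that the $H_i^*$ all lie in the transcendental lattice ${\rm Tr}(S_0)$, and then to upgrade this from ``contained in'' to ``basis of'' by a discriminant comparison.

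To see $H_i^*\in{\rm Tr}(S_0)$, I would use the defining property ${\rm Tr}(S_0)={\rm NS}(S_0)^\perp\cap H_2(S_0,\mathbb{Z})$ together with the hint preceding the proposition: one can choose a basis $\{C_1,\ldots,C_{16}\}$ of ${\rm NS}(S_0)$ all of whose elements are supported inside the two special fibres $\pi_1^{-1}(\alpha_0)\cup \pi_1^{-1}(\alpha_\infty)$ (the $II^*$ and $IV^*$ fibres). On the other hand, each $G_i^*$ (and hence each $H_i^*$, being an integer combination) is built by continuing $1$-cycles along arcs $\delta_j$ that all avoid the critical values $\alpha_0$ and $\alpha_\infty$. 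Therefore the support of each $H_i^*$ is disjoint from $\pi_1^{-1}(\alpha_0)\cup \pi_1^{-1}(\alpha_\infty)$, forcing $H_i^*\cdot C_j=0$ for every $i$ and $j$, as recorded in the remark just before the proposition.

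Next, I would make a discriminant count. The intersection matrix of $\{H_0^*,\ldots,H_5^*\}$ has already been identified in the excerpt as $U\oplus U\oplus A_2(-1)$, of rank $6$ and discriminant $3$; that $T_n\in GL_6(\mathbb{Z})$, so that $\{H_i^*\}$ and $\{G_i^*\}$ generate the same sublattice of $H_2(S_0,\mathbb{Z})$, follows by direct expansion of $\det(T_n)$ along its last column. By Proposition \ref{CorNSTr}, the transcendental lattice ${\rm Tr}(S_0)$ is isometric to $A=U\oplus U\oplus A_2(-1)$ and has the same rank and discriminant. Hence $L:=\mathbb{Z}\langle H_0^*,\ldots,H_5^*\rangle$ is a full-rank sublattice of ${\rm Tr}(S_0)$, and the standard identity $\mathrm{disc}(L)=[{\rm Tr}(S_0):L]^2\cdot\mathrm{disc}({\rm Tr}(S_0))$ forces $[{\rm Tr}(S_0):L]=1$. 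This shows that $\{H_0^*,\ldots,H_5^*\}$ is a $\mathbb{Z}$-basis of ${\rm Tr}(S_0)$.

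The step I expect to be the main obstacle is the combinatorial bookkeeping that establishes each $G_i^*$ as a closed $2$-cycle, rather than merely a $2$-chain. At each junction between consecutive pieces $\Gamma(\delta_j,\gamma)$ and $\Gamma(\pm\delta_{j'},\gamma')$ in the definitions of $G_0^*,\ldots,G_5^*$, one must apply the local monodromy $M_j^{\pm 1}$ of Table 2 and verify that the transported $1$-cycle matches $\gamma'$ so that the boundary contributions cancel; this is a direct but tedious check case by case. Once the $G_i^*$ are confirmed to be cycles and their intersection pairings are extracted by Picard--Lefschetz, the remaining two ingredients---orthogonality to ${\rm NS}(S_0)$ and the discriminant count---are essentially formal.
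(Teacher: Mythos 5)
Your argument is correct and follows essentially the same route as the paper: orthogonality of the $H_i^*$ to ${\rm NS}(S_0)$ via disjointness of their supports from the two reducible fibres $\pi_1^{-1}(\alpha_0)\cup\pi_1^{-1}(\alpha_\infty)$, followed by matching the rank and discriminant of the sublattice they span against ${\rm Tr}(S_0)\simeq U\oplus U\oplus A_2(-1)$ from Proposition \ref{CorNSTr}. The paper leaves the unimodularity of $T_n$ and the index--discriminant count implicit, so your write-up simply makes explicit the steps the authors compress into the two sentences preceding the proposition.
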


Next, let us construct another system $\{G_0,G_1,\cdots,G_5 \}$ of $2$-cycles.
Let $\varrho_i$ $(i=0,1,\cdots,5,\infty)$ be an arc in $x_1$-plane
whose start point and end point are given by Table 3 (see also Figure 4).
\begin{table}[h]
\center
\begin{tabular}{ccccccccccc}
\toprule
 & $\varrho_0$ &   $\varrho_1$    &   $\varrho_2$  & $\varrho_3$ &  $\varrho_4$ &  $\varrho_5$    \\
 \midrule
start point & $\alpha_\infty$  & $\alpha_\infty$  & $\alpha_0$ & $\alpha_\infty$ & $\alpha_0$ & $\alpha_\infty$ \\
end point  & $\alpha_1       $ & $\alpha_2$         & $\alpha_3$ & $\alpha_0$       & $\alpha_4$ & $\alpha_6$ \\
 \bottomrule
\end{tabular}
\caption{Arcs $\varrho_i$}
\end{table} 
Recalling the vanishing cycles in Table 2,
we have the following  $2$-cycles
\begin{align*}
& G_0=\Gamma(\varrho_0,\gamma_1+\gamma_2),\quad G_1=\Gamma(\varrho_1,-\gamma_2),\quad G_2=\Gamma(\varrho_2,\gamma_2),\\
&G_3=\Gamma(\varrho_3,\gamma_1),\quad G_4=\Gamma(\varrho_4,\gamma_2),\quad
G_5=\Gamma(\varrho_5,\gamma_1).
\end{align*}
\begin{figure}[h]
\center
\includegraphics[scale=0.92]{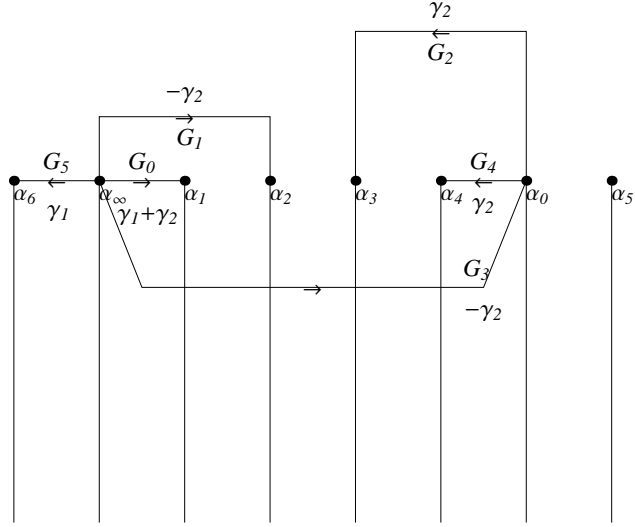}
\caption{$2$-cycles $G_j$ }
\end{figure}
Set
\begin{eqnarray*}
&&
S_n=\begin{pmatrix}
-1& 0& 0& 0& 0& 0\cr 0& 1& 0& 0& 0& 0\cr 0&1&- 1& 1& 0& 0\cr 
0& 1& 1& 0& 0& 0\cr 1&1&-1& 1& -1& 1\cr 0& 0& 0& 0& 0& -1
  \end{pmatrix}
\end{eqnarray*}
and
$$
{}^t (H_0,H_1,H_2,H_3,H_4,H_5)=S_n {}^t (G_0,G_1,G_2,G_3,G_4,G_5).
$$
From Figure 3 and 4,
the intersection numbers for the system $\{H_i\}_i$ and $\{H_i^*\}_i$
are calculated as
\begin{eqnarray}\label{HDual}
&&
H_i \cdot H^{\ast}_j =\delta_{ij}  \quad  (0\leq i, j \leq 5).
\end{eqnarray}
Especially,
from Proposition \ref{PropHSystem} and (\ref{HDual}),
$\{H_0,H_1,\cdots,H_5,C_1,\cdots,C_{16}\}$ gives a basis of $H_2(S_0,\mathbb{Z})$.
Recalling the construction of the period mapping $\Phi$ of (\ref{PerPhi}), 
we have the following theorem.

\begin{thm}\label{ThmGeomCycle}
There exists an $S$-marking $\psi_{[t]}:H_2(S([t]),\mathbb{Z}) \rightarrow L_{K3}$ in the sense of Section 1
so that
the initial marking
$\psi_{0}:H_2(S_0,\mathbb{Z})\rightarrow L_{K3}$
 satisfies
\begin{align*}
& \psi_{0}(H_2)=\Gamma_1,\quad \psi_{0}(H_3)=\Gamma_2, \quad \psi_{0}(H_4)=\Gamma_3, \quad \psi_{0}(H_5)=\Gamma_4,\quad \psi_{0}(H_0)=\Gamma_5, \quad \psi_{0}(H_1)=\Gamma_6,\\
& \psi_{0} (C_j) =\Gamma_{j+6} \quad (j\in \{1,\cdots,16\}).
\end{align*}
Especially, there is a branch of the multivalued mapping $\Phi$ of (\ref{PerPhi}) with the special value
$$
\Phi([t_0]) = \Big( \int_{H_2} \omega_{[t_0]} :\int_{H_3} \omega_{[t_0]} :\int_{H_4} \omega_{[t_0]} : \int_{H_5} \omega_{[t_0]} :\int_{H_0} \omega_{[t_0]} :\int_{H_1} \omega_{[t_0]} \Big).
$$
\end{thm}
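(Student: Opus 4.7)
The plan is to build the initial marking $\psi_0$ directly from the geometric basis $\{H_0,\dots,H_5,C_1,\dots,C_{16}\}$, choose the reference basis $\{\Gamma_i\}$ of $L_{K3}$ to be its image, extend $\psi_0$ to the $S$-marking $\psi_{[t]}$ by the analytic continuation recipe of Section~1, and read off $\Phi([t_0])$ directly from the defining formula (\ref{PerPhi}).

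The crux is to verify that the 22 integral classes $\{H_0,\dots,H_5,C_1,\dots,C_{16}\}$ form a $\mathbb{Z}$-basis of $H_2(S_0,\mathbb{Z})\cong L_{K3}$. By Proposition~\ref{PropHSystem} the system $\{H_0^*,\dots,H_5^*\}$ is a basis of $\mathrm{Tr}(S_0)\cong A$, of discriminant $3$, and by Proposition~\ref{CorNSTr} the fibre-supported classes $\{C_1,\dots,C_{16}\}$ chosen just above Proposition~\ref{PropHSystem} form a basis of $\mathrm{NS}(S_0)\cong M$, of discriminant $-3$. Hence $\mathrm{Tr}(S_0)\oplus\mathrm{NS}(S_0)$ sits inside the unimodular lattice $L_{K3}$ as a sublattice of index $3$. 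The duality (\ref{HDual}), $H_i\cdot H_j^*=\delta_{ij}$, forces the $\mathrm{Tr}$-component of $H_i$ in $H_2(S_0,\mathbb{Q})$ to be $\sum_j (A^{-1})_{ij}H_j^*$; writing $H_i=\sum_j (A^{-1})_{ij}H_j^*+\nu_i$ with $\nu_i\in\mathrm{NS}(S_0)\otimes\mathbb{Q}$, the change-of-basis matrix from $\{H_i^*,C_k\}$ to $\{H_i,C_j\}$ is block upper triangular with determinant $\det A^{-1}=1/3$. Consequently the Gram matrix of $\{H_i,C_j\}$ has determinant $(1/3)^2\cdot(3\cdot(-3))=-1=\det L_{K3}$, and a full-rank sublattice of a unimodular lattice with unimodular discriminant must equal the ambient lattice, so the 22 classes span $L_{K3}$.

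With the basis in hand, define $\psi_0:H_2(S_0,\mathbb{Z})\to L_{K3}$ by the prescribed assignments and declare $(\Gamma_1,\dots,\Gamma_{22}) := (\psi_0(H_2),\psi_0(H_3),\psi_0(H_4),\psi_0(H_5),\psi_0(H_0),\psi_0(H_1),\psi_0(C_1),\dots,\psi_0(C_{16}))$. By construction $\psi_0$ is an integral isometry, and the $S$-marking condition $\psi_0(\mathrm{NS}(S_0))=M$ follows from $\psi_0(C_j)=\Gamma_{j+6}$. Moreover, for any extension of a basis of $M$ to a basis of the unimodular lattice $L_{K3}$ the first six dual basis vectors $\Delta_1,\dots,\Delta_6$ automatically satisfy $\Delta_i\cdot\Gamma_j=0$ for $j\ge 7$, hence land in $M^\perp=A$ and form a basis there, so the Section~1 compatibility condition on $\{\Gamma_i\}$ is met. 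Analytic continuation of $\psi_0$ along an arc in $\mathcal{T}$ then yields the $S$-marking $\psi_{[t]}$, and the value of $\Phi([t_0])$ follows by substituting $\psi_0^{-1}(\Gamma_1)=H_2,\,\psi_0^{-1}(\Gamma_2)=H_3,\,\dots,\,\psi_0^{-1}(\Gamma_6)=H_1$ into (\ref{PerPhi}).

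The main delicate point is the determinant calculation above: the cycles $\{H_i\}$ in Section~2 are constructed precisely so that the basis change from $\{H_i^*,C_k\}$ to $\{H_i,C_j\}$ absorbs the index-$3$ discrepancy between $\mathrm{Tr}(S_0)\oplus\mathrm{NS}(S_0)$ and $L_{K3}$. The geometric input here is the duality (\ref{HDual}), whose verification from Figures~3 and~4 — tracing each $2$-chain $\Gamma(\varrho_i,\gamma)$ against each $\Gamma(\delta_j,\gamma')$ and keeping track of the orientations via the cut lines $l_i$ — is the step that actually requires care.
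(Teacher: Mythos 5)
Your proposal is correct and takes essentially the same route as the paper: the paper likewise deduces that $\{H_0,\dots,H_5,C_1,\dots,C_{16}\}$ is a $\mathbb{Z}$-basis of $H_2(S_0,\mathbb{Z})$ from Proposition \ref{PropHSystem} together with the duality (\ref{HDual}), and then reads the marking and the special value of $\Phi([t_0])$ off the construction of Section 1. The only difference is that you make explicit the discriminant/index bookkeeping ($\det A=3$, $\det M=-3$, index $3$ of $\mathrm{Tr}\oplus\mathrm{NS}$ in $L_{K3}$, absorbed by the determinant-$1/3$ change of basis) that the paper leaves implicit.
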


\section{Hermitian modular forms}

\subsection{Definition of Hermitian modular forms}

Set
\begin{align}\label{HI}
\mathbb{H}_I 
=\Big\{ W=\begin{pmatrix} \tau & z \\ w & \tau ' \end{pmatrix} \in M_2(\mathbb{C}) 
\Big| 
\frac{1}{2 \sqrt{-1} } 
(W-{}^t \overline{W} ) >0  
\Big\}.
\end{align}
This is a $4$-dimensional complex bounded symmetric domain of type $I$.

Let $K$ be an imaginary quadratic field.
Let $\mathfrak{O}_K$ be its ring of integers.
Letting $J=\begin{pmatrix} 0 & -I_2 \\ I_2 &0 \end{pmatrix}$,
set 
\begin{align}
\Gamma(\mathfrak{O}_K)=\Big\{ M\in M_4(\mathfrak{O}_K) \Big| M J {}^t\overline{M}=J \Big\}/\{\pm 1\}.
\end{align}
This group plays a role as a full-modular group for the Hermitian modular forms.
An element $M=\begin{pmatrix}A & B \\ C & D \end{pmatrix}\in\Gamma(\mathfrak{O}_K)$ acts on $\mathbb{H}_I$ by
$$
W \mapsto (AW +B)(CW+D)^{-1}.
$$

\begin{df}\label{ModularI}
Let $k\in \mathbb{Z}$ and $\nu\in {\rm Hom}(\Gamma(\mathfrak{O}_K),\mathbb{C}^\times)$.
A Hermitian modular form of weight $k$ and character $\nu$ for the group $\Gamma(\mathfrak{O}_K)$ is 
a holomorphic function $f$ on $\mathbb{H}_I$
satisfying
\begin{align*}
(f|_k M) (W) = \nu(M)  f(W),
\end{align*}
where
\begin{align}\label{ModularFormTrans}
(f|_k M) (W):= {\rm det}(CW+D)^{-k} f((AW+B)(CW+D)^{-1}).
\end{align}
\end{df}

By specialists in modular forms, 
 structures of rings of Hermitian modular forms are determined in several cases.
For example, 
when $K=\mathbb{Q}(\sqrt{-3})$,
Dern and Krieg \cite{DK} studied the ring of Hermitian modular forms in detail by applying techniques of Eisenstein series and Borcherds products.

\begin{prop}  \label{PropDK}
(1) (\cite{DK}, Theorem 7)
There are algebraically independent Hermitian modular form $E_4$ ($E_6,\phi_9,E_{10}, E_{12}$, resp.) of weight $4$ ($6,9,10,12$, resp.) for the case $K=\mathbb{Q}(\sqrt{-3}).$
There is a modular form $\phi_{45}$ of weight $45$ and a polynomial $p(X_1,X_2,X_3,X_4,X_5)$ such that 
$\phi_{45}^2 = p(E_4,E_6,\phi_9,E_{10},E_{12}).$

(2) (\cite{DK}, Corollary 3)
Let $W=\begin{pmatrix} \tau & z \\ w & \tau' \end{pmatrix}\in \mathbb{H}_I$.
Then, the modular form $\phi_9$ ($\phi_{45}$, resp.) is identically equal to $0$ on the locus $\{z=w\}$ ($\{z=-w\}$, resp.).
\end{prop}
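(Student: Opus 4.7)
The plan is to produce explicit generators of the ring of Hermitian modular forms for $\Gamma(\mathfrak{O}_K)$ with $K=\mathbb{Q}(\sqrt{-3})$ via Eisenstein series and Borcherds automorphic products, and then read off the two vanishing statements from the characters under natural involutions whose fixed loci are the stated hypersurfaces.

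For the even weights $k\in\{4,6,10,12\}$, I would take $E_k$ to be the standard Hermitian Eisenstein series attached to the Klingen parabolic of $\Gamma(\mathfrak{O}_K)$; absolute convergence holds for $k\geq 4$ and the modular transformation rule of Definition \ref{ModularI} follows from the usual cocycle computation $\det(C(MW)+D)=\det(CW+D)\det(C'W+D')^{-1}$. The two odd-weight forms $\phi_9$ and $\phi_{45}$ cannot arise from such sums, and the natural route is Borcherds' automorphic product construction: transport the problem to the orthogonal model through the accidental isomorphism $SU(2,2)\simeq SO_0(2,4)$ already used in the introduction, then choose weakly holomorphic vector-valued input forms for the Weil representation of a signature-$(2,4)$ lattice whose principal parts are arranged so that the resulting infinite product is automorphic of weight $9$ (resp.\ $45$) with divisor concentrated on a single orbit of reflection hyperplanes.

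Algebraic independence of $E_4,E_6,\phi_9,E_{10},E_{12}$ would be verified by computing enough Fourier coefficients at the standard cusp of $\mathbb{H}_I$ to see that the Jacobian matrix of the five-tuple has generic rank $4$ (the maximum possible on a $4$-dimensional domain, which is the relevant nondegeneracy). The identity $\phi_{45}^2=p(E_4,E_6,\phi_9,E_{10},E_{12})$ is then forced: $\phi_{45}^2$ has even weight $90$ and trivial character, hence lies in the polynomial subring generated by the five algebraically independent generators of trivial character produced above, and the coefficients of $p$ are pinned down by matching a finite number of Fourier coefficients on both sides.

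For part (2) I would identify the two reflections in $\Gamma(\mathfrak{O}_K)$ whose fixed hyperplanes are $\{z=w\}$ and $\{z=-w\}$: the involution $\sigma:W\mapsto {}^tW$ coming from an explicit matrix in $\Gamma(\mathfrak{O}_K)$, and a companion $\sigma'$ obtained by twisting $\sigma$ with a nontrivial sixth root of unity in $\mathfrak{O}_K^\times$. Since the Borcherds construction equips $\phi_9$ and $\phi_{45}$ with the nontrivial sign character on $\sigma$ and $\sigma'$ respectively, the functional equation $f(W)=-f(W)$ on the fixed locus of the corresponding involution forces the vanishings. The main obstacle is the combinatorial search for the correct Borcherds input: one must exhibit weakly holomorphic vector-valued modular forms whose principal parts pick out exactly one of the two orbits of reflection hyperplanes appearing in Proposition \ref{PropReflection}. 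This orbit bookkeeping is controlled by the $G_{33}$-action on $\mathbb{P}^4(\mathbb{C})$ mentioned in the introduction, and once that data is in hand the remaining convergence and character computations are routine.
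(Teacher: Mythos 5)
This proposition is not proved in the paper at all: it is imported verbatim from Dern--Krieg, with part (1) cited as Theorem 7 and part (2) as Corollary 3 of \cite{DK}. Your proposal is therefore an attempt to reconstruct the Dern--Krieg structure theorem from scratch. In broad outline it does follow their actual strategy (Eisenstein series for the even-weight generators, Borcherds products on the orthogonal side of $SU(2,2)\simeq SO_0(2,4)$ for $\phi_9$ and $\phi_{45}$, and vanishing on $\{z=w\}$, $\{z=-w\}$ read off from the sign of the forms under the corresponding involutions), so the overall architecture is sound.

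However, there is a genuine gap at the step where you claim the relation $\phi_{45}^2=p(E_4,E_6,\phi_9,E_{10},E_{12})$ is ``forced.'' Algebraic independence of five forms on a four-dimensional domain only shows that $\mathbb{C}[E_4,E_6,\phi_9,E_{10},E_{12}]$ is a polynomial ring over which the full graded ring of modular forms is an algebraic (in fact finite) extension; it does not show that this subring exhausts the forms of the relevant weight and character, and that surjectivity is precisely the hard content of \cite{DK}, Theorem 7. Dern--Krieg establish it by an inductive reduction along the divisors of the Borcherds products (restricting a given form to the Heegner divisors $\{z=w\}$ and $\{z=-w\}$, identifying the restrictions with known rings of Siegel, resp.\ other, modular forms, and dividing by $\phi_9$ or $\phi_{45}$), together with the resulting dimension information. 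Without some such argument, $\phi_{45}^2$ could a priori lie outside the subring generated by the five forms. Two smaller inaccuracies: $\phi_9$ carries a nontrivial character, so the phrase ``five generators of trivial character'' is incorrect as stated (only even powers of $\phi_9$ can occur in $p$); and the correct nondegeneracy criterion for algebraic independence of $n+1$ forms on an $n$-dimensional domain is the nonvanishing of the $(n+1)\times(n+1)$ determinant built from $k_if_i$ and the partial derivatives $\partial f_i/\partial z_j$, not merely generic rank $n$ of the plain Jacobian (five holomorphic functions on a four-dimensional domain with Jacobian of rank four are still functionally dependent; it is the weight grading that rescues independence).
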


\subsection{Modular isomorphism and moduli of $K3$ surfaces}

In our work, we will consider the case of $K=\mathbb{Q}(\sqrt{-3})$.

We have the following modular isomorphism
\begin{align}\label{ModularIso}
f: \mathcal{D}& \ni Z= {}^t (\xi_1:\xi_2:\xi_3:\xi_4:\xi_5:\xi_6) \notag \\
&\mapsto W= \frac{1}{\xi_1}\begin{pmatrix} \xi_3 & \frac{1+\sqrt{-3}}{2} \xi_5 +\frac{1-\sqrt{-3}}{2} \xi_6 \\ \frac{1-\sqrt{-3}}{2} \xi_5 + \frac{1+\sqrt{-3}}{2} \xi_6 & \xi_4 \end{pmatrix}\in \mathbb{H}_I.
\end{align}
This induces an isomorphism $\tilde{f}: \tilde{O}^+(A) \simeq \langle \Gamma(\mathfrak{O}_K),  T_1 \rangle $ of modular groups,
where $T_1$ is the involution given by $W\mapsto {}^t W$. 
Also, we need the involution
$T_2$ which is the involution given by $W=\begin{pmatrix} \tau & z \\ w & \tau' \end{pmatrix}  \mapsto \begin{pmatrix} \tau & -w \\ -z & \tau' \end{pmatrix}. $
We remark  $T_2\in \langle \Gamma(\mathfrak{O}_K),T_1\rangle$.

Let 
$S_1=\begin{pmatrix} 1 & 0 \\ 0 & 0 \end{pmatrix}, S_2 =\begin{pmatrix} 0 & 0 \\ 0 & 1 \end{pmatrix}$ and
$S_3=\begin{pmatrix} 0 & 1 \\ 1 & 0 \end{pmatrix}$.
We  need the actions  given by the matrices
\begin{align}\label{Action}
\begin{cases}
& M_j=\begin{pmatrix} I_2 & S_j \\ 0 & I_2  \end{pmatrix}\in \Gamma(\mathfrak{O}_K) \quad \text{ for }
S_j,\\
& J=\begin{pmatrix} 0 & -I_2 \\ I_2 &0 \end{pmatrix} \in \Gamma(\mathfrak{O}_K).
\end{cases}
\end{align}

\begin{rem}
Recalling Lemma \ref{LemStableG},
we have
\begin{align*}
& \tilde{f}^{-1} (M_1)
=\begin{pmatrix} 
1& 0 & 0& 0& 0 &0 \\
0& 1 & 0& -1& 0 &0 \\
1& 0 & 1& 0& 0 &0 \\
0& 0 & 0& 1& 0 &0 \\
0& 0 & 0& 0& 1 &0 \\
0& 0 & 0& 0& 0 &1 \\
  \end{pmatrix},
  \quad
  \tilde{f}^{-1}(M_2)
  =\begin{pmatrix} 
1& 0 & 0& 0& 0 &0 \\
0& 1 & -1& 0& 0 &0 \\
0& 0 & 1& 0& 0 &0 \\
1& 0 & 0& 1& 0 &0 \\
0& 0 & 0& 0& 1 &0 \\
0& 0 & 0& 0& 0 &1 \\
  \end{pmatrix},\\
  &\tilde{f}^{-1}(M_3)
  =\begin{pmatrix} 
1& 0 & 0& 0& 0 &0 \\
1& 1 & 0& 0& 1 &1 \\
0& 0 & 1& 0& 0 &0 \\
0& 0 & 0& 1& 0 &0 \\
1& 0 & 0& 0& 1 &0 \\
1& 0 & 0& 0& 0 &1 \\
  \end{pmatrix},
  \quad 
   \tilde{f}^{-1}(J)
  =\begin{pmatrix} 
0&-1 & 0& 0& 0 &0 \\
-1& 0 & 0& 0& 0 &0 \\
0& 0 & 0& -1& 0 &0 \\
0& 0 & -1& 0& 0 &0 \\
0& 0 & 0& 0& 1 &0 \\
0& 0 & 0& 0& 0 & 1 \\
  \end{pmatrix},\\
 &\tilde{f}^{-1}(T_1)
  =\begin{pmatrix} 
-1& 0 & 0& 0& 0 &0 \\
0& -1 & 0& 0& 0 &0 \\
0& 0 & -1& 0& 0 &0 \\
0& 0 & 0& -1& 0 &0 \\
0& 0 & 0& 0& 0 &-1 \\
0& 0 & 0& 0& -1 & 0 \\
  \end{pmatrix}, 
  \quad
  \tilde{f}^{-1}(T_2)
  =\begin{pmatrix} 
1& 0 & 0& 0& 0 &0 \\
0& 1 & 0& 0& 0 &0 \\
0& 0 & 1&0& 0 &0 \\
0& 0 & 0& 1& 0 &0 \\
0& 0 & 0& 0& 0 &-1 \\
0& 0 & 0& 0& -1 & 0 \\
  \end{pmatrix}.
\end{align*}
They are elements of $\tilde{O}^+(A).$
\end{rem}

Under the modular isomorphism $f$ of (\ref{ModularIso}),
the modular forms  in the sense of Definition \ref{ModularIV}
are regarded as
the Hermitian modular forms on $\mathbb{H}_I$  of Definition \ref{ModularI}.
Due to this identification, 
Theorem \ref{ThmModular} is concordant with Proposition \ref{PropDK}.
In particular, we have the following result.

\begin{prop} \label{PropModularFormIso}
The modular form $t_{18}(Z)$ ($d_{90}(Z)$, resp.) in the sense of Theorem \ref{ThmModular}
vanishes on the locus $f^{-1}(\{z=w\})$ ($f^{-1}(\{z=-w$\}), resp.), where $f$ is the modular isomorphism (\ref{ModularIso}).
\end{prop}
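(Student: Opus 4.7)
The plan is to match, via the modular isomorphism $f$ of (\ref{ModularIso}), the vanishing loci of the Hermitian modular forms $\phi_9,\phi_{45}$ provided by Proposition \ref{PropDK}(2) with the two branch components $\{t_{18}=0\}$ and $\{d_{90}(t)=0\}$ of the orbifold $[\mathcal{D}/\Gamma]$ described by Proposition \ref{PropReflection}, and then to fix the pairing by a weighted-degree comparison.

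A direct computation from (\ref{ModularIso}) gives $z-w=\sqrt{-3}\,(\xi_5-\xi_6)/\xi_1$ and $z+w=(\xi_5+\xi_6)/\xi_1$, so $f^{-1}(\{z=w\})$ and $f^{-1}(\{z=-w\})$ are the distinct hyperplanes $\{\xi_5=\xi_6\}$ and $\{\xi_5=-\xi_6\}$ in $\mathcal{D}$. Each is the projective fixed hyperplane of the involution $\tilde f^{-1}(T_1)$, respectively $\tilde f^{-1}(T_2)$, in $\Gamma=\tilde O^+(A)$, as is visible from the matrices in the remark, and hence each is a codimension-one branch component of $[\mathcal{D}/\Gamma]$. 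These two hyperplanes lie in distinct $\Gamma$-orbits ($T_2$ stabilises each one individually, so no element of $\tilde f(\Gamma)$ exchanges them; alternatively, the limit N\'eron--Severi lattices $U\oplus E_8(-1)\oplus E_7(-1)$ and $U\oplus E_8(-1)\oplus E_6(-1)\oplus A_1(-1)$ attached by Proposition \ref{PropReflection}(2) to the two orbits of reflection hyperplanes are non-isomorphic). By Proposition \ref{PropReflection}(2) the branch divisor has exactly two orbits, so the unordered pair $\{f^{-1}(\{z=w\}),\,f^{-1}(\{z=-w\})\}$ coincides with $\{\bar\Phi^{-1}(\{t_{18}=0\}),\,\bar\Phi^{-1}(\{d_{90}=0\})\}$.

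To fix the pairing I use the weight of $\phi_9$. Since $\phi_9$ is the only odd-weight generator in Proposition \ref{PropDK}(1), the space of weight-$9$ Hermitian modular forms for $\Gamma(\mathfrak{O}_K)$ is one-dimensional, so $\phi_9\circ T_1=\pm\phi_9$ and $\phi_9^2$ is a Hermitian modular form of weight $18$ for the enlarged group $\tilde f(\Gamma)=\langle\Gamma(\mathfrak{O}_K),T_1\rangle$. Pulled back via $f$, it becomes a modular form on $\mathcal{D}^*$ of weight $18$ for $\Gamma$; the det-character subspace of $\mathcal{A}(\Gamma)$ contains only forms of weight $\geq 54$ by Theorem \ref{ThmModular}(2), so the pullback has trivial character and is by Theorem \ref{ThmModular}(1) a weighted polynomial of degree $18$ in $t_4,t_6,t_{10},t_{12},t_{18}$. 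It vanishes on $f^{-1}(\{z=w\})$ by Proposition \ref{PropDK}(2); if this locus were $\bar\Phi^{-1}(\{d_{90}=0\})$, the polynomial would have to be divisible by the weight-$90$ polynomial $d_{90}$, which is impossible on degree grounds. Hence $f^{-1}(\{z=w\})=\bar\Phi^{-1}(\{t_{18}=0\})$, giving the vanishing of $t_{18}$ on $f^{-1}(\{z=w\})$, and the orbit bijection of the previous paragraph then forces $f^{-1}(\{z=-w\})=\bar\Phi^{-1}(\{d_{90}=0\})$, giving the vanishing of $d_{90}$ on $f^{-1}(\{z=-w\})$.

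The main obstacle is confirming that $\{\xi_5=\xi_6\}$ and $\{\xi_5=-\xi_6\}$ really lie in distinct $\Gamma$-orbits and so exhaust the two branch components of Proposition \ref{PropReflection}, rather than collapsing onto one; this is most cleanly handled by the lattice-theoretic computation involving Proposition \ref{PropReflection}(2), and once it is in hand the remainder is a formal weighted-degree comparison inside the polynomial ring $\mathbb{C}[t_4,t_6,t_{10},t_{12},t_{18}]$.
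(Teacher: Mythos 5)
Your overall strategy is genuinely different from the paper's: the paper obtains this proposition essentially in one line by matching the two ring structures (Theorem \ref{ThmModular} against Proposition \ref{PropDK}), so that the weight-$9$ form $s_9$ with $s_9^2=t_{18}$ is forced to be proportional to $\phi_9$ and $s_{45}$ with $s_{45}^2=d_{90}$ to $\phi_{45}$, after which \cite{DK} Corollary 3 gives both vanishing statements at once. Your geometric route through the branch divisor of $[\mathcal{D}/\Gamma]$ is attractive, and much of it is sound: the computation $z-w=\sqrt{-3}(\xi_5-\xi_6)/\xi_1$, $z+w=(\xi_5+\xi_6)/\xi_1$ is correct, the identification of the two hyperplanes as fixed loci of $\tilde{f}^{-1}(T_1)$ and $\tilde{f}^{-1}(T_2)$ is correct, and the weighted-degree argument with $\phi_9^2$ does force $t_{18}$ to vanish on $f^{-1}(\{z=w\})$ (you should also note that $d_{90}$ is not a $k$-th power for any $k\geq 5$, so that its radical still has weighted degree exceeding $18$; this is easy from the monomial $3125\,t_{10}^9$).

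The genuine gap is the claim that $\{\xi_5=\xi_6\}$ and $\{\xi_5=-\xi_6\}$ lie in distinct $\Gamma$-orbits, which is exactly what you need to transfer the conclusion from the first hyperplane to the second. Neither justification you offer works. The statement that ``$T_2$ stabilises each one individually, so no element of $\tilde f(\Gamma)$ exchanges them'' is a non sequitur: an element can stabilise both hyperplanes while a different element exchanges them. The appeal to the two non-isomorphic limit N\'eron--Severi lattices of Proposition \ref{PropReflection}(2) only distinguishes the two \emph{orbits} from one another; to use it you would have to compute which degeneration occurs along each of your two specific hyperplanes, which you do not do and which is essentially equivalent to the proposition itself. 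The gap is fillable by the lattice computation you allude to but never perform: with $\Delta_5,\Delta_6$ spanning the $A_2(-1)$ summand (as follows from Theorem \ref{ThmGeomCycle} and the intersection matrix of $\{H_i^*\}$), one finds that $\{\xi_5=-\xi_6\}$ is the wall orthogonal to the $(-2)$-vector $\Delta_5+\Delta_6$, while $\{\xi_5=\xi_6\}$ is orthogonal to the primitive $(-6)$-vector $\Delta_5-\Delta_6$; since no isometry carries a primitive vector of norm $-2$ to one of norm $-6$, the walls lie in different orbits. (That computation, combined with the $A_1(-1)$ versus $E_6\to E_7$ degenerations in Proposition \ref{PropReflection}(2), would in fact let you dispense with the $\phi_9$ degree count altogether.) As written, however, the second half of the proposition --- the vanishing of $d_{90}$ on $f^{-1}(\{z=-w\})$ --- is not established.
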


This means that
we have a natural geometric meaning of the Hermitian modular forms 
using the period mapping of $K3$ surfaces.
Especially,
in \cite{DK}, it is  proved the existence of the polynomial $p(X_1,X_2,X_3,X_4,X_5)$ in Proposition \ref{PropDK} based on techniques of modular forms.
Due to  Proposition \ref{PropReflection}, Theorem \ref{ThmModular} and Remark \ref{RemDiscr},
which are results of  $K3$ surfaces,
we are able to  give a natural geometric meaning to  $p(X_1,X_2,X_3,X_4,X_5)$ by identifying this  with $d_{90}(t)$ of (\ref{d90}).

\section{Theta expression of  inverse period mapping via  Burkhardt invariants}

In this section,
we will obtain the complete theta expression of the inverse period mapping of $S([t])$ in (\ref{S(t)}).

In the argument below,
$O(x,y)$ means a power series in $x$ and $y$ 
with the constant term $0$.

\subsection{Igusa invariants }

Let us review the theta functions on the Siegel upper half plane $\mathfrak{S}_2$ of degree $2$.
For $W_0\in \mathfrak{S}_2$ and 
$$ 
\displaystyle m_j=\begin{pmatrix} s \\ t \end{pmatrix}, 
\quad
n_j=\begin{pmatrix} u \\ v \end{pmatrix},
$$
set
\begin{align}\label{SiegelTheta}
\vartheta_j (W_0)=\sum_{(a, b)\in \mathbb{Z}^2} e^{\pi \sqrt{-1} (a+\frac{1}{2}s , b+\frac{1}{2} t) W_0  {}^t(a+\frac{1}{2}s , b+\frac{1}{2} t)  +2\pi \sqrt{-1} (a+\frac{1}{2}s , b+\frac{1}{2} t) {}^t(\frac{1}{2}u, \frac{1}{2} v )  }
\end{align}
for $j\in \{0,1,\cdots, 9\}$.
Here, the correspondence between $j $ and the characteristics $(m_j,n_j)$ is given by Table 4.

\begin{table}[h]
\center
\begin{tabular}{ccccccccccc}
\toprule
$j$ & $0$ &   $1$    &   $2$  & $3$ &  $4$ &  $5$ &   $6  $ &  $7$   &  $8$ & $9$ \\
 \midrule
$m_j$ & $\begin{pmatrix} 0 \\ 0  \end{pmatrix} $ & $\begin{pmatrix} 1 \\ 0  \end{pmatrix}$ & $\begin{pmatrix} 0 \\ 1  \end{pmatrix} $ &$\begin{pmatrix} 1 \\ 1  \end{pmatrix} $ & $\begin{pmatrix} 0 \\ 0  \end{pmatrix} $ & $\begin{pmatrix} 0 \\ 0  \end{pmatrix} $ & $\begin{pmatrix} 0 \\ 0  \end{pmatrix} $ & $\begin{pmatrix} 1 \\ 0  \end{pmatrix} $ & $\begin{pmatrix} 0 \\ 1  \end{pmatrix} $ & $\begin{pmatrix} 1 \\ 1  \end{pmatrix} $ \\
$n_j$ & $\begin{pmatrix} 0 \\ 0  \end{pmatrix}$  &$\begin{pmatrix} 0 \\ 0  \end{pmatrix} $ & $\begin{pmatrix} 0 \\ 0  \end{pmatrix}$ & $\begin{pmatrix} 0 \\ 0  \end{pmatrix} $ & $\begin{pmatrix} 1 \\ 0  \end{pmatrix}$ & $\begin{pmatrix} 0 \\ 1  \end{pmatrix}$ & $\begin{pmatrix} 1 \\ 1  \end{pmatrix}$ & $\begin{pmatrix} 0 \\ 1  \end{pmatrix}$ & $\begin{pmatrix} 1 \\ 0  \end{pmatrix}$ & $\begin{pmatrix} 1 \\ 1  \end{pmatrix}$  \\
 \bottomrule
\end{tabular}
\caption{Correspondence between $j$ and  $(r_j , s_j)$}
\end{table} 

For 
$
W_0=\begin{pmatrix} \tau & z \\ z & \tau' \end{pmatrix} \in \mathfrak{S}_2,
$
we set
\begin{align}\label{q}
q_1=e^{2\pi \sqrt{-1} \tau}, \quad q_2=e^{2\pi \sqrt{-1} \tau'}, \quad \zeta=e^{2\pi \sqrt{-1}z}, \quad
\tilde{q_1}=e^{\pi \sqrt{-1} \tau},\quad \tilde{q_2}=e^{\pi \sqrt{-1} \tau'}.
\end{align}
We have the following Fourier expansion of them.

\begin{prop}\label{PropIgusaTheta}
\begin{align*}
& \vartheta_0(W_0)=1 +O(\tilde{q_1},\tilde{q_2}),\\
& \vartheta_1(W_0)=q_1^{\frac{1}{8}}(2 +O(\tilde{q_1},\tilde{q_2})),\\
& \vartheta_2(W_0)=q_2^{\frac{1}{8}}(2 +O(\tilde{q_1},\tilde{q_2})),\\
& \vartheta_3(W_0)=q_1^{\frac{1}{8}}q_2^{\frac{1}{8}}(2(\zeta^{\frac{1}{4}} + \zeta^{-\frac{1}{4}}) +O(\tilde{q_1},\tilde{q_2})),\\
& \vartheta_4(W_0)=1 +O(\tilde{q_1},\tilde{q_2}),\\
& \vartheta_5(W_0)=1 +O(\tilde{q_1},\tilde{q_2}),\\
& \vartheta_6(W_0)=1 +O(\tilde{q_1},\tilde{q_2}),\\
& \vartheta_7(W_0)=q_1^{\frac{1}{8}}(2 +O(\tilde{q_1},\tilde{q_2})),\\
& \vartheta_8(W_0)=q_2^{\frac{1}{8}}(2 +O(\tilde{q_1},\tilde{q_2})),\\
& \vartheta_9(W_0)=q_1^{\frac{1}{8}}q_2^{\frac{1}{8}}(2(-\zeta^{\frac{1}{4}} + \zeta^{-\frac{1}{4}}) +O(\tilde{q_1},\tilde{q_2})).\\
\end{align*}
\end{prop}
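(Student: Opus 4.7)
The plan is to prove each expansion by direct substitution of the characteristics $(m_j, n_j)$ from Table 4 into the defining series (\ref{SiegelTheta}) and truncating at the lowest order in $\tilde q_1$ and $\tilde q_2$. First I would rewrite a generic summand. Setting $m_j = {}^t(s,t)$ and $n_j = {}^t(u,v)$, the quadratic form in the exponent expands to $(a+s/2)^2\tau + 2(a+s/2)(b+t/2)z + (b+t/2)^2\tau'$, so after multiplying by $\pi i$ and adding the linear term $\pi i\bigl(u(a+s/2) + v(b+t/2)\bigr)$, each term of the series equals
\[
\tilde q_1^{(a+s/2)^2}\,\tilde q_2^{(b+t/2)^2}\,\zeta^{(a+s/2)(b+t/2)}\, e^{\pi i u(a+s/2)}\, e^{\pi i v(b+t/2)}.
\]
This is the only real identity needed; everything else is case analysis.

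Next I would read off the minimal exponents of $\tilde q_1$ and $\tilde q_2$ in each case. When $s=0$, the minimum of $(a+s/2)^2$ is $0$ attained only at $a=0$; when $s=1$, the minimum is $1/4$, attained at both $a=0$ and $a=-1$ (this is where the factor $2$ and, jointly with $t=1$, the sum $\zeta^{1/4}+\zeta^{-1/4}$ appear). Since $\tilde q_1 = q_1^{1/2}$ and $\tilde q_2 = q_2^{1/2}$, a leading power $\tilde q_1^{1/4}$ becomes $q_1^{1/8}$, matching the stated normalizations. Cases $j\in\{0,4,5,6\}$ (both $m_j$-entries zero) give leading term $1$; cases $j\in\{1,7\}$ and $j\in\{2,8\}$ give $2q_1^{1/8}$ and $2q_2^{1/8}$ respectively; cases $j\in\{3,9\}$ give terms with both $q_1^{1/8}$ and $q_2^{1/8}$ multiplied by a combination of $\zeta^{\pm 1/4}$. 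In cases $4,5,6,7,8$ the linear factor $e^{\pi i u(a+s/2)}e^{\pi i v(b+t/2)}$ evaluated at the minimal $a,b$ produces only a $\pm 1$ that leaves the leading coefficient equal to the stated $1$ or $2$.

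The one place that requires a little care is $j=9$, where $(m_9,n_9) = ((1,1),(1,1))$. I would isolate the four leading terms at $(a,b)\in\{0,-1\}^2$ and use $e^{\pi i(a+1/2)} = i(-1)^a$ together with its $b$-analogue, so that the overall sign on each term becomes $-(-1)^{a+b}$. Pairing $(0,0)$ with $(-1,-1)$ and $(0,-1)$ with $(-1,0)$ then yields $-2\zeta^{1/4} + 2\zeta^{-1/4}$, which is exactly $2(-\zeta^{1/4}+\zeta^{-1/4})$ as claimed. This sign bookkeeping in the $j=9$ case is really the only nontrivial point; the remaining nine cases are mechanical once the generic expansion above is in hand.

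Finally, the tail $O(\tilde q_1,\tilde q_2)$ is justified by the observation that every other pair $(a,b)$ either increases $(a+s/2)^2$ by at least $1$ (contributing an extra factor of $\tilde q_1$) or increases $(b+t/2)^2$ by at least $1$ (contributing an extra factor of $\tilde q_2$); the resulting double series converges on $\mathfrak S_2$ by the standard convergence argument for Riemann theta series, which is where the positivity of $\Ima W_0$ enters. No further machinery is required.
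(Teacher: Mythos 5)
Your proof is correct and follows essentially the same route as the paper: the paper likewise factors out $q_1^{\alpha_1}q_2^{\alpha_2}$, tabulates the lattice points $(a,b)$ achieving the minimal exponents (its Table 5 is exactly your list of minimizers), and reads off the leading coefficients, leaving the sign bookkeeping implicit. Your explicit treatment of the $j=9$ case and of the tail estimate only fills in details the paper omits.
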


\begin{proof}
$\vartheta_j (W_0)$ has an expression 
$$
\vartheta_j(W_0) =q_1^{\alpha_1} q_2^{\alpha_2} \sum_{(a,b)\in \mathbb{Z}^2} e^{2\pi\sqrt{-1} (\varphi_1(a,b) \tau +\varphi_2(a,b)\tau')}  e^{2\pi \sqrt{-1}( \psi_1(a,b) z+ \psi_2 (a,b)w)},
$$
where $\varphi_l(a,b),\psi_l(a,b) \in \mathbb{C}[a,b,c,d]$ $(l\in\{1,2\})$ and  $\varphi_l(a,b)$ does not contain any constant terms.
For each $j$,  
the pair $(a,b)$ satisfying $\varphi_1(a,b)=\varphi_2(a,b)=0 $ are given in Table 5.
From that table, the assertion follows.

\begin{table}[h]
\center
\begin{tabular}{ccccccccccc}
\toprule
$k$ & $\alpha_1$ & $\alpha_2$ & $(a,b)$ for $\varphi_1(a,b)=\varphi_2(a,b)=0 $  \\
 \midrule
 $0$ & $0$ & $0$ & $(0,0)$  \\
$1$ & $\frac{1}{8}$  & $0$ & $(0,0),(-1,0)$  \\
 $2$ & $0$ & $\frac{1}{8}$ & $(0,0),(0,-1)$ \\
 $3$ & $\frac{1}{8}$  & $\frac{1}{8}$ & $(0,0),(-1,0),(0,-1),(-1,-1)$   \\
 $4$ & $0 $  & $0$ & $(0,0)$ \\
 $5$ & $0 $  & $0$ & $(0,0)$ \\
 $6$ & $0 $  & $0$ & $(0,0)$ \\
 $7$ & $\frac{1}{8}$  & $0$ & $(0,0),(-1,0)$  \\
 $8$ & $0$ & $\frac{1}{8}$ & $(0,0),(0,-1)$ \\
 $9$ & $\frac{1}{8}$  & $\frac{1}{8}$ & $(0,0),(-1,0),(0,-1),(-1,-1)$   \\
  \bottomrule
\end{tabular}
\caption{$(a,b)$ for $\varphi_1(a,b)=\varphi_2(a,b)=0$}
\end{table} 
\end{proof}

Let $\psi_4,\psi_6,\chi_{10}$ and $\chi_{12} $ be the Igusa invariants defined in \cite{I35}.
The function $\psi_4$ ($\psi_6,\chi_{10},\chi_{12}$, resp.) gives a Siegel modular form of the character ${\rm id}$ and weight  $4$, ($6,10,12$, resp.) for the Siegel modular group $Sp_4(\mathbb{Z})$. 
They are algebraically independent.
Precisely, they have the following  expressions by the theta functions of (\ref{SiegelTheta}):
\begin{align*}
  \psi_4 =&2^{-2} \cdot \sum_{m=0}^9 \vartheta_m^8,\\
  \psi_6 = &2^{-2} \cdot \sum_{\text{syzygous}} \pm (\vartheta_{m_1} \vartheta_{m_2} \vartheta_{m_3})^4,\\
 \chi_{10} =& - 2^{-14} \cdot \prod_{m=0}^9 \theta_m^2 ,\\
  \chi_{12} =& 2^{-17} \cdot 3^{-1} \cdot \sum_{\text{complements of  G\"{o}pel quadruples}} (\vartheta_{m_1} \vartheta_{m_2}\vartheta_{m_3}\vartheta_{m_4} \vartheta_{m_5} \vartheta_{m_6})^4\\
  =& 2^{-17} \cdot 3^{-1} \cdot( (\vartheta_{4} \vartheta_{5} \vartheta_{6}\vartheta_{7} \vartheta_{8} \vartheta_{9})^4
  +(\vartheta_{1} \vartheta_{2} \vartheta_{3}\vartheta_{7} \vartheta_{8} \vartheta_{9})^4
  +(\vartheta_{0} \vartheta_{2} \vartheta_{5}\vartheta_{6} \vartheta_{7} \vartheta_{9})^4
  +(\vartheta_{0} \vartheta_{1} \vartheta_{4}\vartheta_{6} \vartheta_{8} \vartheta_{9})^4\\
  &+(\vartheta_{2} \vartheta_{3} \vartheta_{4}\vartheta_{6} \vartheta_{8} \vartheta_{9})^4
  +(\vartheta_{1} \vartheta_{3} \vartheta_{5}\vartheta_{6} \vartheta_{7} \vartheta_{9})^4
  +(\vartheta_{1} \vartheta_{2} \vartheta_{3}\vartheta_{4} \vartheta_{5} \vartheta_{6})^4
  +(\vartheta_{0} \vartheta_{2} \vartheta_{3}\vartheta_{5} \vartheta_{8} \vartheta_{9})^4\\
  &+(\vartheta_{0} \vartheta_{1} \vartheta_{3}\vartheta_{4} \vartheta_{7} \vartheta_{9})^4
  +(\vartheta_{0} \vartheta_{3} \vartheta_{4}\vartheta_{5} \vartheta_{7} \vartheta_{8})^4
  +(\vartheta_{0} \vartheta_{1} \vartheta_{2}\vartheta_{6} \vartheta_{7} \vartheta_{8})^4
  +(\vartheta_{0} \vartheta_{2} \vartheta_{3}\vartheta_{4} \vartheta_{6} \vartheta_{7})^4\\
  &+(\vartheta_{0} \vartheta_{1} \vartheta_{3}\vartheta_{5} \vartheta_{6} \vartheta_{8})^4
  +(\vartheta_{0} \vartheta_{1} \vartheta_{2}\vartheta_{4} \vartheta_{5} \vartheta_{9})^4
  +(\vartheta_{1} \vartheta_{2} \vartheta_{4}\vartheta_{5} \vartheta_{7} \vartheta_{8})^4).
\end{align*}
We have the local Fourier expansions of the Igusa invariants as follows.

\begin{prop}\label{PropIgusaFourier}
\begin{align*}
& \psi_4(W_0) = 1 +O(q_1,q_2),\\  
& \psi_6(W_0) = 1 +O(q_1,q_2), \\
& \chi_{10} (W_0) = - 2^{-2}\cdot q_1 q_2 ((\zeta +\zeta^{-1} -2) + O(q_1,q_2)),\\
& \chi_{12} (W_0) =  2^{-2} \cdot 3^{-1}\cdot q_1 q_2 ((\zeta +\zeta^{-1} +10) + O(q_1,q_2)).
\end{align*}
\end{prop}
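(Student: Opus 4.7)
The plan is to substitute the Fourier expansions from Proposition \ref{PropIgusaTheta} into each of the four explicit theta-function formulas for $\psi_4, \psi_6, \chi_{10}, \chi_{12}$, and to collect only those contributions of minimal order in $q_1$ and $q_2$. The key observation is that the ten theta functions split into four groups according to their leading $(q_1,q_2)$-orders: $\vartheta_0, \vartheta_4, \vartheta_5, \vartheta_6$ have constant leading term $1$; $\vartheta_1, \vartheta_7$ contribute $q_1^{1/8}$; $\vartheta_2, \vartheta_8$ contribute $q_2^{1/8}$; and $\vartheta_3, \vartheta_9$ contribute $q_1^{1/8}q_2^{1/8}$, the last two additionally carrying the nontrivial $\zeta$-dependence $2(\zeta^{1/4}\pm\zeta^{-1/4})$.

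For $\psi_4$ one raises each $\vartheta_m$ to the $8$th power; only $\vartheta_0^8, \vartheta_4^8, \vartheta_5^8, \vartheta_6^8$ contribute the constant leading term $1$ while the other six contribute a positive power of $q_1$ or $q_2$, so $\psi_4 = 2^{-2}(1+1+1+1) + O(q_1,q_2) = 1+O(q_1,q_2)$. For $\psi_6$ the same bookkeeping applies: one enumerates the syzygous triples $(m_1,m_2,m_3)$ among $\{0,4,5,6\}$ with their Igusa signs and checks that the signed sum evaluates to $4$, giving $\psi_6 = 1+O(q_1,q_2)$.

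For $\chi_{10} = -2^{-14}\prod_{m=0}^{9}\vartheta_m^2$, the $q$-orders combine to give the overall factor $q_1 q_2$, with constant prefactor $1^4 \cdot 4^4 \cdot \bigl(4(\zeta^{1/4}+\zeta^{-1/4})^2\bigr)\bigl(4(-\zeta^{1/4}+\zeta^{-1/4})^2\bigr) = 4^6(\zeta-2+\zeta^{-1})$; multiplying by $-2^{-14}$ yields $-2^{-2}q_1q_2(\zeta+\zeta^{-1}-2)+O(\ldots)$. For $\chi_{12}$, one first checks that only $10$ of the $15$ G\"opel-complementary sextuples contribute to $q_1q_2$ (namely those containing at most one of $\vartheta_3,\vartheta_9$); one then separates these $10$ into four containing $\vartheta_3$, four containing $\vartheta_9$, and two containing neither, each yielding leading-term $8^4(\zeta^{1/4}+\zeta^{-1/4})^4$, $8^4(-\zeta^{1/4}+\zeta^{-1/4})^4$, and $16^4$ respectively. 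After expansion, the sum of the first two groups is $8\cdot 4096(\zeta+6+\zeta^{-1})$ and the third contributes $2\cdot 16^4$; totalling $2^{15}(\zeta+\zeta^{-1}+10)$ and multiplying by $2^{-17}\cdot 3^{-1}$ gives the stated expansion.

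The only real obstacle is the combinatorial bookkeeping in the $\chi_{12}$ case: one must verify that the fractional powers $\zeta^{\pm 1/2}$ arising from $(\zeta^{1/4}\pm\zeta^{-1/4})^4$ cancel between the $\vartheta_3$-type and $\vartheta_9$-type sextuples. This cancellation is guaranteed a priori by the fact that $\chi_{12}$ is a genuine Siegel modular form with integral Fourier exponents in $q_1,q_2,\zeta$, but it also follows directly because the coefficients $8^4$ in front of $(\zeta^{1/4}+\zeta^{-1/4})^4$ and $(-\zeta^{1/4}+\zeta^{-1/4})^4$ are equal and the fractional contributions are of opposite sign. Otherwise the computation is purely mechanical, reducing to reading off leading coefficients from the tabulated exponents $(\alpha_1,\alpha_2)$ and pairs $(a,b)$ in the proof of Proposition \ref{PropIgusaTheta}.
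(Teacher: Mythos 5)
Your proposal is correct and follows exactly the route the paper takes: the paper's proof simply states that the assertion follows from the explicit theta expressions of the Igusa invariants together with Proposition \ref{PropIgusaTheta}, and your computation fills in precisely that substitution, with the leading coefficients (including the $2^{15}(\zeta+\zeta^{-1}+10)$ total for $\chi_{12}$ and the cancellation of the $\zeta^{\pm 1/2}$ terms) checking out.
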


\begin{proof}
From the explicit definitions of the Igusa  invariants and Proposition \ref{PropIgusaTheta}, the assertion follows.
\end{proof}

By the inverse period mapping for the Clingher-Doran family of (\ref{CDFamily}),
the pair of parameters  $(\alpha:\beta:\gamma:\delta)$ are expressed by the Igusa invariants.
The following  is the main theorem of \cite{CD}.

\begin{prop} (\cite{CD}, Theorem 3.5)\label{PropCDIgusa}
 $$
 (\alpha:\beta:\gamma:\delta) =(\psi_4 (W_0 ) :\psi_6 (W_0) :2^{12}\cdot 3^5 \cdot \chi_{10} (W_0) : 2^{12}\cdot 3^6\cdot \chi_{12} (W_0)).
 $$
\end{prop}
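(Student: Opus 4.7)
The plan is to reduce the identity to an application of Igusa's structure theorem via the exceptional isomorphism $SO_0(2,3)^+ \simeq PSp_4(\mathbb{R})$, and then to pin down the proportionality constants by matching Fourier expansions at the standard cusp.

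First, by Proposition \ref{PropReflection} specialized to the locus $t_{18}=0$ (which by Proposition \ref{PropS(t)CD} is exactly the Clingher--Doran subfamily), the N\'eron--Severi lattice of a generic $S_{CD}(\alpha{:}\beta{:}\gamma{:}\delta)$ is $U \oplus E_8(-1) \oplus E_7(-1)$; its orthogonal complement in $L_{K3}$ has signature $(2,3)$. The period domain is therefore a $3$-dimensional type IV domain $\mathcal{D}_{CD}$, and I would use the accidental isomorphism $SO_0(2,3)^+ \simeq PSp_4(\mathbb{R})$ to biholomorphically identify $\mathcal{D}_{CD}$ with the Siegel upper half space $\mathfrak{S}_2$ and the relevant arithmetic group with an appropriate extension of $Sp_4(\mathbb{Z})/\{\pm 1\}$. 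Under this identification, a modular form of weight $k$ on the type IV side pulls back to a Siegel modular form of weight $2k$ on $\mathfrak{S}_2$. Equivalently, one may invoke the Shioda--Inose structure of the Clingher--Doran K3s, which gives a canonical biregular equivalence between their moduli and $\mathcal{A}_2 = \mathfrak{S}_2/Sp_4(\mathbb{Z})$.

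Second, I would match generators using Igusa's structure theorem. The coordinates $\alpha, \beta, \gamma, \delta$ are modular forms of trivial character on the type IV side of weights $2, 3, 5, 6$, hence Siegel modular forms for $Sp_4(\mathbb{Z})$ of weights $4, 6, 10, 12$. By Igusa's theorem, the even-weight ring is $\mathbb{C}[\psi_4, \psi_6, \chi_{10}, \chi_{12}]$; the graded pieces in weights $4$ and $6$ are one-dimensional, so $\alpha = c_1 \psi_4$ and $\beta = c_2 \psi_6$. In weights $10$ and $12$, one must isolate $\chi_{10}, \chi_{12}$ from the products $\psi_4\psi_6$, $\psi_4^3$, $\psi_6^2$. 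To do this I would use divisorial information: the locus $\gamma = 0$ parametrizes K3s whose N\'eron--Severi lattice acquires an extra $A_1$-summand, which under Shioda--Inose corresponds to the Humbert divisor of discriminant $1$---precisely the zero locus of $\chi_{10}$ on $\mathcal{A}_2$. A parallel argument for the $\delta=0$ locus forces $\delta$ to be a scalar multiple of $\chi_{12}$.

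Finally, I would fix the four constants by comparing leading Fourier coefficients at the diagonal cusp. Using the Shioda--Inose correspondence to relate the periods of $S_{CD}$ to the classical periods of a principally polarized abelian surface, one extracts the leading terms of $\alpha, \beta, \gamma, \delta$ in $(q_1, q_2, \zeta)$ of (\ref{q}) and compares them with Proposition \ref{PropIgusaFourier}; the normalizations $c_1 = c_2 = 1$, $c_3 = 2^{12}\cdot 3^5$, $c_4 = 2^{12}\cdot 3^6$ then fall out. The main obstacle is precisely this last step: the powers of $2$ and $3$ come from the interplay of (i) Igusa's classical normalizations (the factors $2^{-14}$ and $2^{-17}\cdot 3^{-1}$ appearing in the definitions of $\chi_{10}$ and $\chi_{12}$), (ii) the weighted-projective normalization $(\alpha{:}\beta{:}\gamma{:}\delta) \in \mathbb{P}(2,3,5,6)$, and (iii) the normalization of the holomorphic $2$-form on $S_{CD}$. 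Tracking all three simultaneously is bookkeeping-intensive; a cleaner route, which I would actually pursue, is to verify the identity at a single well-chosen CM point---for example $W_0 = \sqrt{-1}\cdot I_2$, corresponding to the square of a Gaussian elliptic curve---where the Igusa invariants are classically tabulated and the corresponding $S_{CD}$ can be identified by inspection, thereby fixing all four constants at once.
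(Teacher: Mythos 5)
First, a point of calibration: the paper does not prove this proposition at all. It is imported verbatim from Clingher--Doran (\cite{CD}, Theorem~3.5) and used as an input to Lemma~\ref{Proptzw} and Theorem~\ref{ThmThetaExpression}. So there is no ``paper's own proof'' to compare against; what you have written is a reconstruction of the argument in \cite{CD}. As such, your overall strategy is the right one and is essentially what Clingher--Doran do: the $t_{18}=0$ subfamily is $U\oplus E_8(-1)\oplus E_7(-1)$-polarized, its transcendental lattice has signature $(2,3)$, the Shioda--Inose structure (which this subfamily does admit, unlike the full family $S(t)$) identifies the moduli space with $\mathfrak{S}_2/Sp_4(\mathbb{Z})$ up to the transpose involution, Igusa's structure theorem pins down the weight-$4$ and weight-$6$ forms up to scalars, and the remaining forms and constants are fixed by divisorial and Fourier-expansion data. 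Your identification of $\{\gamma=0\}$ with the Humbert locus $\mathcal{H}_1=\{\chi_{10}=0\}$ (enhancement of $E_7(-1)$ to $E_8(-1)$, i.e.\ the product-of-elliptic-curves locus) is also correct and is consistent with the expansion $\gamma\sim(\zeta+\zeta^{-1}-2)$ recorded in Proposition~\ref{PropCDFourier}.

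Two concrete weak points. (i) The ``parallel argument for the $\delta=0$ locus'' does not exist in the form you suggest: the divisor $\{\chi_{12}=0\}$ in $\mathcal{A}_2$ is not a N\'eron--Severi enhancement locus and has no analogous lattice-theoretic characterization, so you cannot isolate $\chi_{12}$ inside the three-dimensional space spanned by $\psi_4^3,\psi_6^2,\chi_{12}$ this way. The working substitute is to show that $\gamma$ and $\delta$ are \emph{cusp forms} (from the degeneration of $S_{CD}$ at the boundary of the period domain) and to use $\dim S_{10}(Sp_4(\mathbb{Z}))=\dim S_{12}(Sp_4(\mathbb{Z}))=1$; note you cannot quote Proposition~\ref{PropCDFourier} for this, since that proposition is itself derived from the statement you are proving. (ii) Your proposed shortcut of fixing all four constants at the single CM point $W_0=\sqrt{-1}\,I_2$ fails: that point lies on the diagonal $z=0$, where $\chi_{10}$ vanishes (to order $2$) and $\gamma$ vanishes as well, so the relation there reads $0=c_3\cdot 0$ and determines nothing about $c_3$. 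You would need either an off-diagonal special point or the Fourier-coefficient comparison you describe earlier, which is the route actually taken in \cite{CD}.
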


According to Proposition \ref{PropIgusaFourier} and \ref{PropCDIgusa}, we have the following Fourier expansions of the parameters of the Clingher-Doran family.

\begin{prop}\label{PropCDFourier}
\begin{align*}
& \alpha(W_0) = 1 +O(q_1,q_2),\\  
& \beta(W_0) = 1 +O(q_1,q_2), \\
& \gamma (W_0) = - 2^{10} \cdot 3^5 \cdot q_1 q_2 ((\zeta +\zeta^{-1} -2) + O(q_1,q_2)),\\
& \delta (W_0) =  2^{10}\cdot 3^{5} \cdot q_1 q_2 ((\zeta +\zeta^{-1} +10) + O(q_1,q_2)).
\end{align*}
\end{prop}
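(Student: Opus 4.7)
The plan is to derive Proposition \ref{PropCDFourier} as a direct consequence of the two preceding propositions by substitution. Specifically, Proposition \ref{PropCDIgusa} identifies the Clingher-Doran parameters with Igusa invariants,
\begin{align*}
\alpha(W_0) &= \psi_4(W_0), & \beta(W_0) &= \psi_6(W_0),\\
\gamma(W_0) &= 2^{12}\cdot 3^5\cdot \chi_{10}(W_0), & \delta(W_0) &= 2^{12}\cdot 3^6\cdot \chi_{12}(W_0),
\end{align*}
as a point in the weighted projective space $\mathbb{P}(2,3,5,6)$. Proposition \ref{PropIgusaFourier} supplies the Fourier expansions of the right-hand sides, so the Fourier expansions of $\alpha,\beta,\gamma,\delta$ should follow at once by multiplying by the stated constants and simplifying $2^{12}\cdot 3^5\cdot(-2^{-2})=-2^{10}\cdot 3^5$ and $2^{12}\cdot 3^6\cdot 2^{-2}\cdot 3^{-1}=2^{10}\cdot 3^5$.

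The one point requiring a brief comment is the normalization of representatives in the weighted projective space. The identity of Proposition \ref{PropCDIgusa} determines $(\alpha:\beta:\gamma:\delta)$ only up to the $\mathbb{C}^*$-action with weights $(2,3,5,6)$, so to extract honest equalities of functions I need a fixed scaling. Since both $\alpha(W_0)$ and $\psi_4(W_0)$ are known modular forms with leading Fourier coefficient $1$, the scaling factor $\lambda\in\mathbb{C}^*$ implicit in the weighted-projective identification is forced to be $\lambda=1$; then the identifications for $\beta$ via $\lambda^3$, for $\gamma$ via $\lambda^5$, and for $\delta$ via $\lambda^6$ are all trivial, and the four claimed expansions follow as stated.

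There is no real obstacle in this proof beyond bookkeeping the constants and fixing the projective normalization. The substantive work has already been done upstream: the Fourier expansions of the theta constants in Proposition \ref{PropIgusaTheta} give the expansions of the Igusa invariants in Proposition \ref{PropIgusaFourier}, and the geometric identification of the Clingher-Doran parameters with Igusa invariants in Proposition \ref{PropCDIgusa} is quoted from \cite{CD}. The present proposition is thus a short corollary, useful later in Section 4 precisely because it exhibits the leading Fourier behavior of the Clingher-Doran parameters on the subfamily $\{t_{18}=0\}$ and will serve as input for identifying the Hermitian modular forms on $\mathbb{H}_I$ with their theta/Burkhardt expressions under the embedding of Proposition \ref{PropS(t)CD}.
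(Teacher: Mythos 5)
Your proof is correct and is essentially the paper's own argument: the paper derives Proposition \ref{PropCDFourier} directly by combining Proposition \ref{PropCDIgusa} with the expansions in Proposition \ref{PropIgusaFourier}, and your constant bookkeeping ($2^{12}\cdot 3^5\cdot(-2^{-2})=-2^{10}\cdot 3^5$, $2^{12}\cdot 3^6\cdot 2^{-2}\cdot 3^{-1}=2^{10}\cdot 3^5$) matches the stated result. Your remark on fixing the representative under the weighted $\mathbb{C}^*$-action is a sensible clarification the paper leaves implicit, but it does not change the argument.
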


\subsection{Dern-Krieg theta functions}

Dern-Krieg \cite{DK} considered
the five theta functions $\Theta_k (W)$ $(k\in\{0,1,\cdots,4\})$ 
on the symmetric space $\mathbb{H}_I$ of (\ref{HI})
(see also \cite{FSMTheta}).
In this section, we will see their properties.

Let
$K$ be the imaginary quadratic field $\mathbb{Q}(\sqrt{-3})$.
For $W\in \mathbb{H}_I$, set
\begin{align}\label{DKTheta}
\Theta_k(W) =\sum_{g\in \mathfrak{O}_K^2 +\frac{1}{\sqrt{3}} p_k } e^{2\pi \sqrt{-1}  \overline{g} W  {}^t g}.
\end{align}
Here, the correspondence between the index $k$ and the characters $p_k$ is given in Table 6.
\begin{table}[h]
\center
\begin{tabular}{ccccccccccc}
\toprule
$k$ & $0$ &   $1$    &   $2$  & $3$ &  $4$  \\
 \midrule
$p_k=\begin{pmatrix} s\\t \end{pmatrix}$ & $\begin{pmatrix} 0 \\ 0  \end{pmatrix} $ & $\begin{pmatrix} 1 \\ 0  \end{pmatrix}$ & $\begin{pmatrix} 0 \\ 1  \end{pmatrix} $ &$\begin{pmatrix} 1 \\ -1  \end{pmatrix} $ & $\begin{pmatrix} 1 \\ 1  \end{pmatrix} $  \\
 \bottomrule
\end{tabular}
\caption{Correspondence between $k$ and $p_k$}
\end{table}

We  need the explicit expansion of (\ref{DKTheta})  on the loci $\{z=w\}$ and $\{z=-w\}$
in
 $ \mathbb{H}_I =\Big\{ W= \begin{pmatrix} \tau & z \\ w & \tau' \end{pmatrix} \Big\}$. 
By the direct calculation, we obtain the following.

\begin{prop}
\begin{align}\label{DKThetaz=w}
\Theta_k (W) |_{z=w}=&\sum_{(a,b,c,d)\in \mathbb{Z}^4} e^{2\pi \sqrt{-1} (a^2 - a c +c^2 +s c +\frac{s^2}{3})\tau + 2\pi \sqrt{-1}(b^2 -bd +d^2 +td +\frac{t^2}{3})\tau'} \notag\\
&\hspace{3cm} \times e^{2\pi \sqrt{-1} (2ab+2cd -ad -bc +tc +sd+\frac{2}{3} st))z }.
\end{align}
Also,
\begin{align}
\Theta_k (W) |_{z=-w}=&\sum_{(a,b,c,d)\in \mathbb{Z}^4} e^{2\pi \sqrt{-1} (a^2 - a c +c^2 +s c +\frac{s^2}{3})\tau + 2\pi \sqrt{-1} (b^2 -bd +d^2 +td +\frac{t^2}{3})\tau'} \notag \\
&\hspace{3cm}\times e^{2\pi \sqrt{-1} (\sqrt{3} \sqrt{-1} (ad-bc) +\frac{2\sqrt{-1}}{\sqrt{3}}(at-bs) -\frac{\sqrt{-1}}{\sqrt{3}} (ct-ds) )z }.
\end{align}
\end{prop}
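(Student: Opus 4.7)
The plan is to expand the exponent $\overline{g} W {}^t g$ directly and then substitute $w = \pm z$. Parametrize the shifted lattice by writing
$$g = (g_1, g_2) = (a + c\omega + s\mu,\ b + d\omega + t\mu), \qquad (a,b,c,d) \in \mathbb{Z}^4,$$
with $\omega = (-1+\sqrt{-3})/2$ (so $\omega+\overline{\omega} = -1$, $\omega\overline{\omega} = 1$) and $\mu = \sqrt{-3}/3 = \sqrt{-1}/\sqrt{3}$ the purely-imaginary generator representing the shift $1/\sqrt{3}$. The only facts about $\mu$ and $\omega$ that will be used are the five relations $\mu+\overline{\mu} = 0$, $\mu\overline{\mu} = 1/3$, $\mu\overline{\omega}+\overline{\mu}\omega = 1$, $\omega - \overline{\omega} = \sqrt{-3}$, and $\mu\overline{\omega} - \overline{\mu}\omega = -\sqrt{-1}/\sqrt{3}$, all of which follow by direct verification.

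With $W = \bigl(\begin{smallmatrix}\tau & z \\ w & \tau'\end{smallmatrix}\bigr)$, matrix multiplication gives
$$\overline{g} W {}^t g = |g_1|^2 \tau + \overline{g_1} g_2 \cdot z + \overline{g_2} g_1 \cdot w + |g_2|^2 \tau'.$$
On $\{z=w\}$ the cross terms combine to $(\overline{g_1}g_2 + \overline{g_2}g_1)\,z$; on $\{z=-w\}$ they combine to $(\overline{g_1}g_2 - \overline{g_2}g_1)\,z$. Expanding $|g_1|^2 = (a+c\overline{\omega}+s\overline{\mu})(a+c\omega+s\mu)$ bilinearly, the integral part produces $a^2-ac+c^2$, and the shift cross-terms collapse via $\mu+\overline{\mu} = 0$, $\mu\overline{\omega}+\overline{\mu}\omega = 1$, and $\mu\overline{\mu} = 1/3$ into $0 + sc + s^2/3$. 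The analogous computation for $|g_2|^2$ yields $b^2-bd+d^2+td+t^2/3$.

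For the symmetric cross-sum, $\overline{g_1}g_2 + \overline{g_2}g_1$ gives $2ab+2cd-ad-bc$ from the integer part together with $0 + tc + sd + 2st/3$ from the shift cross-terms, producing the first formula. For the antisymmetric combination, $\omega-\overline{\omega} = \sqrt{-3}$ generates the main term $\sqrt{3}\sqrt{-1}(ad-bc)$, while $\mu-\overline{\mu} = 2\sqrt{-1}/\sqrt{3}$ and $\mu\overline{\omega}-\overline{\mu}\omega = -\sqrt{-1}/\sqrt{3}$ supply the shift contributions $\tfrac{2\sqrt{-1}}{\sqrt{3}}(at-bs) - \tfrac{\sqrt{-1}}{\sqrt{3}}(ct-ds)$, establishing the second formula. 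The only real obstacle is notational: one must pin down the convention by which the symbol $1/\sqrt{3}$ in the Dern--Krieg definition stands for the purely-imaginary element $\sqrt{-3}/3$ (a generator of the inverse different of $\mathfrak{O}_K$), and track conjugation of $\omega$ and $\mu$ simultaneously with care; once this is fixed, the verification is a routine bilinear expansion followed by re-summation over $(a,b,c,d)\in\mathbb{Z}^4$.
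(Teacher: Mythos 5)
Your proposal is correct and matches the paper's (entirely implicit) argument: the paper simply says ``by the direct calculation,'' and you carry out exactly that bilinear expansion of $\overline{g}W\,{}^t g$ over $g=(a+c\omega+s\mu,\,b+d\omega+t\mu)$. Your identification of the shift $1/\sqrt{3}$ with the purely imaginary element $\sqrt{-3}/3$ is the right reading of the Dern--Krieg convention (it is forced by the absence of $as$, $bt$ terms in the stated exponents), and all five relations you list check out, reproducing both restricted exponents exactly.
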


\begin{rem}
The restriction (\ref{DKThetaz=w}) of (\ref{DKTheta})   on the locus $\{z=w\}$ is equal to the theta functions for the root lattice $A_2$, which are precisely studied in \cite{FSM}.
\end{rem}

Let us consider the Fourier expression of the Dern-Krieg theta functions.
Recall the notation (\ref{q})
and let
$
\xi=e^{\frac{\pi}{\sqrt{3}} z}.
$
We have the following.

\begin{prop}\label{PropDKThetaFourier}
(1)
\begin{align*}
& \Theta_0(W)|_{z=w} = 1+O(q_1,q_2), \\
& \Theta_1(W)|_{z=w} = q_1^{\frac{1}{3}}(3+O(q_1,q_2)),\\
& \Theta_2(W)|_{z=w} = q_2^{\frac{1}{3}}(3+O(q_1,q_2)),\\
& \Theta_3(W)|_{z=w} = q_1^{\frac{1}{3}} q_2^{\frac{1}{3}}(3(\zeta^{-\frac{2}{3}} +2\zeta^{\frac{1}{3}})+O(q_1,q_2)),\\
& \Theta_4(W)|_{z=w} = q_1^{\frac{1}{3}} q_2^{\frac{1}{3}}(3(\zeta^{\frac{2}{3}} +2\zeta^{-\frac{1}{3}})+O(q_1,q_2)).
\end{align*}

(2)
\begin{align*}
& \Theta_0(W)|_{z=-w} = 1+O(q_1,q_2), \\
& \Theta_1(W)|_{z=-w} = q_1^{\frac{1}{3}}(3+O(q_1,q_2)),\\
& \Theta_2(W)|_{z=-w} = q_2^{\frac{1}{3}}(3+O(q_1,q_2)),\\
& \Theta_3(W)|_{z=-w} = q_1^{\frac{1}{3}} q_2^{\frac{1}{3}}(3(1+\xi^2 +\xi^{-2})+O(q_1,q_2)),\\
& \Theta_4(W)|_{z=-w} = q_1^{\frac{1}{3}} q_2^{\frac{1}{3}}(3(1+\xi^2 +\xi^{-2})+O(q_1,q_2)).
\end{align*}

\end{prop}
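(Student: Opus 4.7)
The plan is to proceed by direct evaluation of the explicit series representations of $\Theta_k(W)|_{z=w}$ and $\Theta_k(W)|_{z=-w}$ established in the preceding proposition. In either case the $(a,b,c,d)$-th summand factorises as a $q_1$-power times a $q_2$-power times an exponential in $z$; the $q_1$-exponent $Q_s(a,c) := a^2 - ac + c^2 + sc + \tfrac{s^2}{3}$ depends only on $(a,c)$ and $s$, while the $q_2$-exponent $Q_t(b,d) := b^2 - bd + d^2 + td + \tfrac{t^2}{3}$ depends only on $(b,d)$ and $t$. The leading Fourier contribution to $\Theta_k$ therefore comes from the quadruples $(a,b,c,d) \in \mathbb{Z}^4$ that simultaneously minimise these two positive definite forms. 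First I would tabulate the minimisers: for $s = 0$, the form $a^2 - ac + c^2$ is the $A_2$ norm form, with unique minimum $0$ at $(0,0)$; for $s = \pm 1$, a short check gives $\min_{\mathbb{Z}^2} Q_{\pm 1} = 1/3$, attained at exactly three lattice points $(0,0)$, $(0,\mp 1)$, $(\mp 1,\mp 1)$; and in every case the next value of $Q_s$ exceeds the minimum by at least $1$. This at once yields the stated $q_1$- and $q_2$-prefactors for every $k$ and settles cases $k = 0, 1, 2$, since the $z$-factor reduces to $\zeta^{0} = \xi^{0} = 1$ on each minimising triple.

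For $k = 3, 4$ there are nine minimising quadruples and the $z$-exponent must be computed for each. On $\{z = w\}$ the $z$-exponent equals $2ab + 2cd - ad - bc + tc + sd + \tfrac{2st}{3}$; for $k = 3$ with $(s,t) = (1,-1)$ a direct enumeration of the nine quadruples yields three copies of $-\tfrac{2}{3}$ and six copies of $\tfrac{1}{3}$, producing $3 \zeta^{-2/3} + 6 \zeta^{1/3} = 3(\zeta^{-2/3} + 2\zeta^{1/3})$; the $k = 4$ case follows by replacing $t$ with $-t$. On $\{z = -w\}$ the $z$-exponent carries the imaginary coefficients $\sqrt{3}\sqrt{-1}$ and $\sqrt{-1}/\sqrt{3}$; after multiplication by the $2\pi\sqrt{-1}$ in the exponential and substitution of $\xi = e^{\pi z/\sqrt{3}}$, these imaginary factors reduce to $\pm 2$ in the exponent of $\xi$. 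The nine quadruples then split evenly as three copies each of $\xi^{0} = 1$, $\xi^{2}$ and $\xi^{-2}$, giving the common value $3(1 + \xi^{2} + \xi^{-2})$ for both $k = 3$ and $k = 4$.

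The main step requiring care is the final conversion of the $\sqrt{-1}/\sqrt{3}$ coefficients into integer powers of $\xi$; everything else is routine bookkeeping of minimisers of two copies of the $A_2$ norm form with a prescribed translation. Because the next value of $Q_s$ or $Q_t$ always exceeds the minimum by at least $1$, every non-minimising contribution is absorbed into the stated $q_1^{1/3} q_2^{1/3} \cdot O(q_1, q_2)$ remainder, completing the expansion.
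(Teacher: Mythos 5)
Your proposal is correct and follows essentially the same route as the paper: the paper also factors out the minimal powers $q_1^{\alpha_1}q_2^{\alpha_2}$ and tabulates (in its Table 7) exactly the nine (resp.\ one or three) quadruples $(a,b,c,d)$ annihilating the two shifted $A_2$ norm forms, which is precisely your product of minimiser sets, and then reads off the leading $\zeta$- or $\xi$-contribution from those quadruples. Your write-up merely makes explicit the product structure of the minimising set and the conversion of the imaginary coefficients into powers of $\xi$, details the paper leaves to the reader.
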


\begin{proof}
$\Theta_k (W)$ has an expression 
$$
\Theta_k(Z) =q_1^{\alpha_1} q_2^{\alpha_2} \sum_{(a,b,c,d)\in \mathbb{Z}^4} e^{2\pi\sqrt{-1} (\varphi_1(a,b,c,d) \tau +\varphi_2(a,b,c,d)\tau')} \times e^{2\pi \sqrt{-1}( \psi_1(a,b,c,d) z+ \psi_2 (a,b,c,d)w)},
$$
where $\varphi_l(a,b,c,d),\psi_l(a,b,c,d) \in \mathbb{C}[a,b,c,d]$ $(l\in\{1,2\})$ and  $\varphi_l(a,b,c,d)$ does not contain any constant terms.
For each $k$, $(a,b,c,d)$ satisfying $\varphi_1(a,b,c,d)=\varphi_2(a,b,c,d)=0 $ is given in Table 7.
From that table, we can obtain the assertion.

\begin{table}[h]
\center
\begin{tabular}{ccccccccccc}
\toprule
$k$ & $\alpha_1$ & $\alpha_2$ & $(a,b,c,d)$ for $\varphi_1(a,b,c,d)=\varphi_2(a,b,c,d)=0 $  \\
 \midrule
 $0$ & $0$ & $0$ & $(0,0,0,0)$  \\
$1$ & $\frac{1}{3}$  & $0$ & $(0,0,0,0),(0,0,-1,0),(-1,0,-1,0)$  \\
 $2$ & $0$ & $\frac{1}{3}$ & $(0,0,0,0),(0,0,0,-1),(0,-1,0,-1)$ \\
 $3$ & $\frac{1}{3}$  & $\frac{1}{3}$ & $(0,0,0,0),(0,0,0,1),(0,1,0,1),(0,0,-1,0),(0,0,-1,1),$   \\
 & &                                                   & $(0,1,-1,1),(-1,0,-1,0),(-1,0,-1,1),(-1,1,-1,1)$\\
 $4$ & $\frac{1}{3}$  & $\frac{1}{3}$ & $(0,0,0,0),(0,0,0,-1),(0,-1,0,-1),(0,0,-1,0),(0,0,-1,-1),$ \\
 & &                                                   & $(0,-1,-1,-1),(-1,0,-1,0),(-1,0,-1,-1),(-1,-1,-1,-1)$\\
 \bottomrule
\end{tabular}
\caption{ $(a,b,c,d)$ for $\varphi_1(a,b,c,d) =\varphi_2(a,b,c,d) = 0$}
\end{table} 

\end{proof}

\subsection{Burkhardt invariants}

Let us consider the actions of the system in (\ref{Action}) on the theta functions.
Setting $K=\mathbb{Q}(\sqrt{-3})$,
let us consider a mapping $\Psi: \Gamma(\mathfrak{O}_K) \rightarrow GL_5(\mathbb{C})$ 
 satisfying
\begin{align}
({\rm det}M)^{-1}
\begin{pmatrix}
\Theta_0|_1 {M} \\ \Theta_1|_1 {M} \\ \Theta_2|_1 {M} \\ \Theta_3|_1 {M} \\ \Theta_4|_1 {M}
\end{pmatrix}
=\Psi(M) \begin{pmatrix}
\Theta_0 \\ \Theta_1 \\ \Theta_2 \\ \Theta_3 \\ \Theta_4
\end{pmatrix},
\end{align}
for $M\in \Gamma(\mathfrak{O}_K)$.
Here, we use the notation (\ref{ModularFormTrans}).
We have the following explicit expression of $\Psi$.

\begin{prop}(\cite{DK} Theorem 8)
Set $\omega=e^{\frac{2 \pi \sqrt{-1}}{3}}$.
Then, for the matrices in (\ref{Action}),
\begin{align*}
\Psi(M_1)={\rm diag}(1,\omega,1,\omega,\omega), 
\quad \Psi(M_2)={\rm diag}(1,1,\omega,\omega,\omega), 
\quad \Psi(M_3)={\rm diag}(1,1,1,\omega,\omega^2)
\end{align*}
and
\begin{align*}
\Psi(J)=\frac{1}{3}
\begin{pmatrix}
-1 & -2 & -2 & -2 & -2 \\
-1 & 1 & -2 & 1 & 1\\
-1 & -2 & 1& 1 & 1\\
-1 & 1 & 1 & 1 & -2 \\
-1 & 1 & 1 & -2 & 1
\end{pmatrix}.
\end{align*}
\end{prop}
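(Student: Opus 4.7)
The plan is to treat the three translation matrices $M_1, M_2, M_3$ and the involution $J$ separately. The former act by $W \mapsto W+S_j$ with Jacobian $\det(CW+D)^{-1}=1$, so they can be handled by a direct phase calculation, while $J$ acts by $W \mapsto -W^{-1}$ with nontrivial Jacobian $\det(W)^{-1}$ and requires a Poisson summation argument.

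For $M_j = \begin{pmatrix} I_2 & S_j \\ 0 & I_2 \end{pmatrix}$ the definition (\ref{ModularFormTrans}) reduces to $(\Theta_k|_1 M_j)(W) = \Theta_k(W+S_j)$, and I would expand
\begin{align*}
\Theta_k(W+S_j) = \sum_{g \in \mathfrak{O}_K^2 + \frac{1}{\sqrt{3}} p_k} e^{2\pi\sqrt{-1}\,\bar{g} W {}^t g}\cdot e^{2\pi\sqrt{-1}\,\bar{g} S_j {}^t g}.
\end{align*}
The goal is to show that the phase $e^{2\pi\sqrt{-1}\,\bar{g} S_j {}^t g}$ is independent of the summation variable. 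Writing $g = \alpha + \frac{1}{\sqrt{3}}p_k$ with $\alpha \in \mathfrak{O}_K^2$ and $p_k=(s,t)^\top$, and using the $\mathbb{Z}$-basis $\{1,\omega\}$ of $\mathfrak{O}_K$ exactly as in the proof of Proposition \ref{PropDKThetaFourier}, the $\alpha$-dependent part of $\bar{g}S_j{}^t g$ turns out to be a rational integer, leaving the characteristic-only remainder $\frac{s^2}{3}$ for $S_1$, $\frac{t^2}{3}$ for $S_2$, and $\frac{2st}{3}$ for $S_3$. Substituting the five characteristics from Table 6 then yields the phases $\omega^{s^2}, \omega^{t^2}, \omega^{2st}$, which reproduce the diagonals $(1,\omega,1,\omega,\omega)$, $(1,1,\omega,\omega,\omega)$, $(1,1,1,\omega,\omega^2)$ of $\Psi(M_1),\Psi(M_2),\Psi(M_3)$.

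For $J$ the slash action gives $(\Theta_k|_1 J)(W) = \det(W)^{-1}\Theta_k(-W^{-1})$. The plan is to apply Poisson summation to the real rank-$4$ lattice $\mathfrak{O}_K^2 \subset \mathbb{C}^2$ with respect to the Hermitian form $g \mapsto \bar{g}W{}^t g$: the Gaussian integral contributes a factor $\det(W)$ that exactly cancels the Jacobian, and the summation passes to the dual lattice $\frac{1}{\sqrt{-3}}\mathfrak{O}_K^2$. Reparametrizing dual-lattice points as $h = \frac{1}{\sqrt{-3}} q$ with $q \in \mathfrak{O}_K^2$ and partitioning by the residue class of $q$ in $\mathfrak{O}_K^2/\sqrt{-3}\,\mathfrak{O}_K^2 \cong \mathbb{F}_3^2$, the nine classes fold under $q \leftrightarrow -q$ onto the five characteristics $\pm p_j$; the shift $\frac{1}{\sqrt{3}}p_k$ contributes an additive character $e^{-2\pi\sqrt{-1}\,\mathrm{Tr}_{K/\mathbb{Q}}(p_k \bar{q}/\sqrt{-3})}$. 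Summing these characters over each $\pm$-class produces the integer entries $\pm 1, \pm 2$ of the $(k,j)$-coefficient of $\Psi(J)$, and the covolume of $\mathfrak{O}_K^2$ in $\mathbb{R}^4$ yields the overall normalization $\frac{1}{3}$.

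The main obstacle is the Poisson summation step for $J$: one must reconcile the $\frac{1}{3}$ prefactor with the character sums, and keep track of signs carefully so that $+1$ entries are distinguished from $-2$ entries in each row. A useful consistency check is $\Psi(J)^2 = I_5$, which follows formally from $J^2 = -I_4$ being trivial in $\Gamma(\mathfrak{O}_K)/\{\pm 1\}$ together with the invariance of each $\Theta_k$ under $g \mapsto -g$, and this relation pins down many sign ambiguities at once. A further check is that $\Psi(M_1),\Psi(M_2),\Psi(M_3),\Psi(J)$ must satisfy the defining relations of $\Gamma(\mathfrak{O}_K)$, which can be verified directly on the explicit matrices.
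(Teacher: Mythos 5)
The paper contains no proof of this proposition: it is imported verbatim from \cite{DK} Theorem~8, so there is no internal argument to compare yours against, and what you have written is in substance the standard proof one would find there. The translation part is correct: for $M_j$ the slash action reduces to $W\mapsto W+S_j$, the added phase $e^{2\pi\sqrt{-1}\,\bar g S_j{}^tg}$ depends only on the characteristic, and the residues $\tfrac{s^2}{3},\tfrac{t^2}{3},\tfrac{2st}{3}$ evaluated on Table~6 do reproduce the three diagonal matrices. One caveat you should make explicit: for your claim that the $\alpha$-dependent part of $\bar gS_j{}^tg$ is a rational integer to hold, the characteristic shift in (\ref{DKTheta}) must be read as $\tfrac{1}{\sqrt{-3}}p_k$, an element of the inverse different, rather than literally $\tfrac{1}{\sqrt{3}}p_k$; this reading is forced by the integral cross terms $sc$ and $td$ in the paper's own restriction formula (\ref{DKThetaz=w}), and your computation is the one consistent with it. The $J$ part is the right idea (Poisson summation over the rank-$4$ lattice $\mathfrak{O}_K^2$ with dual $\tfrac{1}{\sqrt{-3}}\mathfrak{O}_K^2$ of index $9$, folding the nine residue classes onto the five characteristics via $g\mapsto -g$, covolume $3$ giving the prefactor $\tfrac13$), and your consistency checks $\Psi(J)^2=I_5$ and the group relations are valid and do resolve the sign ambiguities. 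But it remains a sketch exactly at the point you flag: the Gaussian Fourier transform of $e^{2\pi\sqrt{-1}\,\bar gW{}^tg}$, viewed as a complex-coefficient quadratic form on $\mathbb{R}^4$ whose imaginary part is positive definite by the definition of $\mathbb{H}_I$, produces not only the factor cancelling $\det(W)^{-1}$ but also an overall $-1$ from the square root of $\det(\cdot/\sqrt{-1})$ in four real variables; that minus sign is what turns each folded character sum into $-(\omega^{m}+\omega^{-m})$, i.e.\ $-2$ when the pairing vanishes mod $3$ and $+1$ otherwise, and $-1$ for the unpaired trivial class. Carrying out that transform explicitly is the only missing step between your plan and a complete proof; there is no wrong turn in the plan itself.
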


We remark that
$\Gamma(\mathfrak{O}_K)/ H$, 
where $H $ $(\subset \Gamma(\mathfrak{O}_K) )$ is the congruence subgroup of level $\sqrt{-3}$,
 is a simple group of order $25920$.
 This group is called the Burkhardt group.
The above mapping $\Psi $ defines an irreducible $5$-dimensional representation of the Burkhardt group.
The Molien series of this representation is calculated as
$$
\frac{1+t^{45}}{(1-t^4) (1-t^6) (1-t^{10}) (1-t^{12}) (1-t^{18})}.
$$
Letting $G_{33}$ be the group of No.33 in the list of Shephard-Todd \cite{ST} of complex reflection group,
the Burkhardt group is isomorphic to $G_{33}/\{\pm 1\}$.

Let us consider the polynomial ring $\mathbb{C}[T_0,T_1,T_2,T_3,T_4]$ in indeterminates $T_0,\cdots,T_4$.
For integers $n_0,\cdots, n_4$ such that $n_0\geq 0$ and $n_1\geq n_2 \geq n_3 \geq n_4 \geq 0$,
we set
$$
(n_0; n_1,n_2,n_3,n_4) =T_0^{n_0} \sum_{\sigma \in \mathfrak{S}_4 /{\rm Stab} (n_1,n_2,n_3,n_4)} \prod_{i=1,2,3,4} T_{\sigma(i)}^{n_i} \in \mathbb{C}[T_0,T_1,T_2, T_3 ,T_4],
$$
where $\mathfrak{S}_4$ is the symmetric group and 
$${\rm Stab} (n_1,n_2,n_3,n_4)= \{\sigma\in \mathfrak{S}_4 | (n_{\sigma(1)},n_{\sigma(2)},n_{\sigma(3)},n_{\sigma(4)})=(n_1,n_2,n_3,n_4) \}.$$
However, unless $n_1=0$,  we omit entries $0$ for $n_j$ $(j=1,2,3,4)$ in this notation.  
For example, 
\begin{align*}
& (4;0)=T_0^4,\quad \quad  (1;3)=T_0 (T_1^3+T_2^3 +T_3^3 +T_4^3),\\
& (0;3,3)=T_1^3 T_2^3+ T_1^3 T_3^3 +T_1^3 T_4^3 + T_2^3 T_3^3 +T_2^3 T_4^3 + T_3^3 T_4^3.
\end{align*}
Burkhardt \cite{B} determines the algebraically independent  invariants $B_4,B_6,B_{10},B_{12}$ and $B_{18}$ of the representation $\Psi$ as follows:
\begin{align*}
B_4(T_0,\cdots,T_4) =& (4;0) +8(1;3) +48 (0;1,1,1,1), \\
B_6(T_0,\cdots,T_4)=& (6;0) -20 (3;3) +360(2;1,1,1,1)+80 (0;3,3) -8 (0;6), \\
B_{10}(T_0,\cdots,T_4) =& (6;1,1,1,1) -(4;3,3) +(3;4,1,1,1) +9(2;2,2,2,2) + (1;6,3) -6(1;3,3,3)\\
& -2(0;7,1,1,1)+2(0;4,4,1,1), \\
B_{12}(T_0,\cdots,T_4) =& 3(8;1,1,1,1) +5(6;3,3) -33(5;4,1,1,1) +243(4;2,2,2,2) -(3;6,3) -102(3;3,3,3) \\
& +30(2;7,1,1,1) +78(2;4,4,1,1) -108(1;5,2,2,2) -4(0;9,3) +16(0;6,6)\\
& -8(0;6,3,3) +168(0;3,3,3,3),
\end{align*}
and
\begin{align*}
B_{18}(T_0,\cdots,T_4) = & 3(10;2,2,2,2) -4(9;3,3,3) +6 (8,4,4,1,1) -18(7;5,2,2,2) -(6;6,6) +10(6;6,3,3)\\
             &+96(6;3,3,3,3)-12(5;7,4,1,1) -90(5;4,4,4,1) +27(4;8,2,2,2) +108 (4;5,5,2,2)  \\
             &+2(3;9,6) -8(3;9,3,3)+4(3;6,6,3) -168(3;6,3,3,3) +6(2;10,4,1,1) \\
             &-24(2;7,7,1,1)+12(2;7,4,4,1) +315(2;4,4,4,4)-12(1;11,2,2,2)\\
             &+18(1;8,5,2,2)-72(1;5,5,5,2) -(0;12,6)+2(0;12,3,3)+2(0;9,9)-2(0;9,6,3)\\
             &-8(0;9,3,3,3)+6(0;6,6,6)+8(0;6,6,3,3).
\end{align*}

\begin{rem}
In the original definition of these invariants in  \cite{B},
there are some typos.
For example, 
monomials ``$+12 Y_0^8 \sum Y_1^4 Y_2^4 Y_3 Y_4$'', ``$-36 Y_0^7 \sum Y_1^5 Y_2^2 Y_3^3 Y_4^3$'', ``$+12 Y_0 \sum Y_1^{11} Y_2^2  Y_3^2 Y_4^2$'' and \\
``$-27 Y_0 \sum Y_1^5 Y_2^5 Y_3^5 Y_4^2$'' in the original $J_{18}$ (\cite{B} p.209)
are incorrect.
\end{rem}

From now on, 
we will use the notation 
$$
B_j (W) = B_j (\Theta_0(W),\Theta_1(W),\Theta_2(W),\Theta_3(W),\Theta_4(W)) \quad (j\in \{4,6,10,12,18\}),
$$
where $\Theta_k (W)$ are the theta functions of (\ref{DKTheta}).
Especially, $W\mapsto B_j(W)$ is a holomorphic function on  $\mathbb{H}_I$.

\begin{rem}
The above explicit expressions of $B_4(W)$ and $B_6(W)$ are equal to $E_4$ and $E_6$ in  \cite{DK} Section 5.
We remark that Dern and Krieg obtain them by using a calculation aided by MAGMA.
On the other hand, those of $B_{10} (W), B_{12} (W)$ and $B_{18} (W)$ do not appear in that paper.
\end{rem}

We will need the Fourier expansions of these functions.
Due to Proposition \ref{PropDKThetaFourier} and the definition of the Burkhardt invariants, we have the following.

\begin{prop}\label{PropBFourier}
(1)
\begin{align*}
& B_4(W)|_{z=w} = 1+O(q_1,q_2),\\
& B_6(W)|_{z=w} = 1+O(q_1,q_2),\\
& B_{10}(W)|_{z=w} = 2 \cdot 3^4 \cdot q_1 q_2 ((\zeta+\zeta^{-1}-2) + O(q_1,q_2)),\\
& B_{12}(W)|_{z=w} = 2 \cdot 3^5 \cdot q_1 q_2 ((\zeta+\zeta^{-1} +10 ) + O(q_1,q_2)),\\
&B_{18}(W)|_{z=w} =0. 
\end{align*}

(2)
\begin{align*}
& B_4(W)|_{z=-w} = 1+O(q_1,q_2),\\
& B_6(W)|_{z=-w} = 1+O(q_1,q_2),\\
& B_{10}(W)|_{z=-w} = 3^4 \cdot q_1 q_2 ((\xi^4 +\xi^{-4} +2\xi^2 +2\xi^{-2} -6) + O(q_1,q_2)),\\
& B_{12}(W)|_{z=-w} = 3^5 \cdot q_1 q_2 ((\xi^4 +\xi^{-4} +2\xi^2 +2\xi^{-2} +18) + O(q_1,q_2)),\\
&B_{18}(W)|_{z=-w} = 3^9 \cdot q_1^2 q_2^2 ((\xi-\xi^{-1})^6 (\xi+\xi^{-1})^2 +O(q_1,q_2)). 
\end{align*}
\end{prop}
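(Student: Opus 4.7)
My plan is to perform the Fourier substitution directly and isolate leading terms. First I would track the $q$-orders of the building blocks: Proposition~\ref{PropDKThetaFourier} gives $\Theta_0 = 1 + O$, $\Theta_1 = O(q_1^{1/3})$, $\Theta_2 = O(q_2^{1/3})$, and $\Theta_3, \Theta_4 = O(q_1^{1/3} q_2^{1/3})$ on both loci $\{z=w\}$ and $\{z=-w\}$. A monomial $T_0^{n_0} T_1^{n_1} T_2^{n_2} T_3^{n_3} T_4^{n_4}$ appearing in one of the Burkhardt polynomials thus contributes at order $q_1^{(n_1+n_3+n_4)/3}\, q_2^{(n_2+n_3+n_4)/3}$. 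This reduces each Fourier expansion to enumerating those symmetrized monomials realizing the minimal $q$-order and expanding their theta products.

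For $B_4$ and $B_6$ the unique monomial with vanishing $q$-order is $T_0^4$ (resp.\ $T_0^6$), yielding the constant leading term $1$. For $B_{10}$ and $B_{12}$ I would enumerate the symmetrized monomials with $n_1+n_3+n_4 = n_2+n_3+n_4 = 3$; for $B_{10}$ these are the sub-monomials $T_0^6 T_1 T_2 T_3 T_4$ from $(6;1,1,1,1)$ and $T_0^4 T_1^3 T_2^3$ from $(4;3,3)$. Substituting the leading theta expansions and collecting coefficients of powers of $\zeta = e^{2\pi \sqrt{-1} z}$ (for $z=w$) or of $\xi = e^{\pi z/\sqrt{3}}$ (for $z=-w$) yields the asserted formulas; the verification is elementary modulo bookkeeping.

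The vanishing $B_{18}(W)|_{z=w}=0$ deserves separate attention. A structural observation helps: every monomial in $B_{18}$ has $n_0 \le 10$, so the order $q_1 q_2$ cannot appear at all in $B_{18}|_{z=\pm w}$, and the first admissible order is $q_1^2 q_2^2$. At this order, exactly four symmetrized types contribute, coming from $(10;2,2,2,2)$, $(9;3,3,3)$, $(8;4,4,1,1)$, and $(6;6,6)$; a direct expansion shows that the four contributions cancel on $\{z=w\}$. To promote this cancellation to full vanishing I would invoke Proposition~\ref{PropDK}(2): $\phi_9$ vanishes on $\{z=w\}$, hence so does $\phi_9^2$, which is a weight-$18$ invariant with trivial character. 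By matching weights and characters with the ring structure of Theorem~\ref{ThmModular}, $B_{18}$ must be proportional to $\phi_9^2$ (as the only new weight-$18$ generator not produced by lower-weight invariants), from which the vanishing follows. For $B_{18}|_{z=-w}$ the same four monomial types contribute at order $q_1^2 q_2^2$, but now with $\Theta_3|_{z=-w}$ and $\Theta_4|_{z=-w}$ agreeing to leading order, the cancellation disappears and the combined contribution produces the stated $3^9 q_1^2 q_2^2((\xi-\xi^{-1})^6(\xi+\xi^{-1})^2 + O(q_1,q_2))$.

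The main obstacle is the combinatorial bulk of the $B_{18}$ case. Although only a handful of monomial types govern the leading $q$-order that actually occurs, the polynomial $B_{18}$ itself contains many symmetrized types, and both the enumeration of the relevant sub-monomials and the expansion of the corresponding theta products are error-prone (cf.\ the remark on typos in Burkhardt's original paper). A computer algebra check would be prudent, and the identification of $B_{18}$ as a scalar multiple of $\phi_9^2$ should be solidified by comparing several Fourier coefficients rather than the leading one alone.
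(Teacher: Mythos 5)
Your overall strategy coincides with the paper's: the authors prove this proposition by exactly the order bookkeeping you describe, substituting the leading expansions of Proposition \ref{PropDKThetaFourier} into the Burkhardt polynomials and collecting the monomials of minimal $q$-order. Your enumeration is correct as far as I can check it: for $B_{10}$ only $T_0^6T_1T_2T_3T_4$ and $T_0^4T_1^3T_2^3$ survive at order $q_1q_2$, and for $B_{18}$ the constraint $n_0=6+n_3+n_4$ singles out precisely the four types $(10;2,2,2,2)$, $(9;3,3,3)$, $(8;4,4,1,1)$, $(6;6,6)$ at order $q_1^2q_2^2$; with the coefficients $3,-4,6,-1$ the resulting expression $3^9\bigl(u^2v^2-4(u^3+v^3)+18uv-27\bigr)$ does vanish for $u=\zeta^{-2/3}+2\zeta^{1/3}$, $v=\zeta^{2/3}+2\zeta^{-1/3}$ and does reduce to $3^9(\xi-\xi^{-1})^6(\xi+\xi^{-1})^2$ when $u=v=1+\xi^2+\xi^{-2}$.

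The genuine gap is your argument for the exact identity $B_{18}(W)|_{z=w}=0$, which is a statement about \emph{all} Fourier coefficients, not just the leading one. Your proposed promotion via ``$B_{18}$ must be proportional to $\phi_9^2$ as the only new weight-$18$ generator'' does not follow: being a new generator of the invariant ring in degree $18$ determines $B_{18}$ only modulo the subring generated by $B_4,B_6,B_{10},B_{12}$, and the weight-$18$ part of the Dern--Krieg ring is spanned by $E_4^3E_6$, $E_6^3$, $E_4^2E_{10}$, $E_6E_{12}$ and $\phi_9^2$, so a priori $B_{18}$ could equal $c\,\phi_9^2$ plus a nonzero combination of the first four, which would not vanish on $\{z=w\}$. (Worse, the identification $B_{18}\propto\phi_9^2$ is essentially equivalent to $t_{18}=-2^{16}B_{18}$, i.e.\ to part of Theorem \ref{ThmThetaExpression}, whose proof uses the present proposition; assuming it here risks circularity.) The paper closes this gap differently: the restriction of the Dern--Krieg theta functions to $\{z=w\}$ gives the theta constants of the root lattice $A_2$, and the identity is the polynomial relation $P_{18}(A_1,\dots,A_5)=0$ proved in Freitag--Salvati Manni \cite{FSM} (see the remarks surrounding the proposition). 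An alternative self-contained repair along your lines would be to observe that $B_{18}|_{z=w}$ is a Siegel cusp form of weight $18$ for $Sp_4(\mathbb{Z})$ whose $q_1q_2$-coefficient vanishes by your order count, and that no nonzero element of the two-dimensional space $\mathbb{C}\psi_4^2\chi_{10}\oplus\mathbb{C}\psi_6\chi_{12}$ has this property; but some such additional input is indispensable, since leading-order cancellation alone cannot yield an exact vanishing.
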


\begin{rem}
The relation $B_{18}(W)|_{z=w} =0 $ is equal to the relation $P_{18} (A_1,\cdots,A_5)=0$ in \cite{FSM} p.31-32.
\end{rem}

\subsection{Theta expression of inverse period mapping}

In this section, let us give the explicit expression of the parameters $t_4,t_6,t_{10},t_{12}, t_{18}$ of (\ref{S(t)}) by the Dern-Krieg theta functions. 
We will consider the composite of 
the modular isomorphism $f^{-1}$ of (\ref{ModularIso}) 
and the modular form $t_j$ in Theorem \ref{ThmModular}.
By abuse of notation,
$\mathbb{H}_I \ni W\mapsto t_j(W) \in \mathbb{C}$ stands for this function.

First, 
let us obtain the explicit expression of them on the locus $\{z=w\}$ in $\mathbb{H}_I$.

\begin{lem}\label{Proptzw}
\begin{align*}
&(t_4(W)|_{z=w}:t_6(W)|_{z=w}:t_{10}(W)|_{z=w}:t_{12}(W)|_{z=w})\\
&=(-3 \cdot B_4(W)|_{z=w} : -2 \cdot  B_6(W)|_{z=w}: 2^9 \cdot 3 \cdot B_{10}(W)|_{z=w} : 2^9 \cdot B_{12}(W)|_{z=w}).
\end{align*}
\end{lem}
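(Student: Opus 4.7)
The plan is to reduce the identity to the Clingher--Doran subfamily, where a theta formula for the parameters already exists (Proposition \ref{PropCDIgusa}), and then reinterpret the Igusa invariants in terms of Burkhardt invariants restricted to $\{z=w\}$.

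First, by Proposition \ref{PropModularFormIso}, the locus $\{z=w\}\subset\mathbb{H}_I$ corresponds under the modular isomorphism $f$ of (\ref{ModularIso}) to the zero locus of $t_{18}$. By Proposition \ref{PropS(t)CD}, this is exactly the image of the Clingher--Doran family, with the parameter identification $t_4=-3\alpha$, $t_6=-2\beta$, $t_{10}=-\gamma$, $t_{12}=\delta$. Inserting the theta expressions from Proposition \ref{PropCDIgusa} gives
\begin{equation*}
t_4|_{z=w} = -3\psi_4,\quad t_6|_{z=w} = -2\psi_6,\quad t_{10}|_{z=w} = -2^{12}\!\cdot\! 3^5\,\chi_{10},\quad t_{12}|_{z=w} = 2^{12}\!\cdot\! 3^6\,\chi_{12}.
\end{equation*}
Thus the proof is reduced to matching $B_k|_{z=w}$ with the corresponding Igusa invariant up to explicit scalars.

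Next I would establish the Siegel-modular identifications
\begin{equation*}
B_4|_{z=w}=\psi_4,\quad B_6|_{z=w}=\psi_6,\quad B_{10}|_{z=w}=-2^3\!\cdot\! 3^4\,\chi_{10},\quad B_{12}|_{z=w}=2^3\!\cdot\! 3^6\,\chi_{12}.
\end{equation*}
Because $Sp_4(\mathbb{Z})\subset\Gamma(\mathfrak{O}_K)$ stabilises the locus $\{z=w\}$, and the Burkhardt invariants are $\Psi$-invariant polynomials in the theta functions (so the linear characters of $\Psi$ cancel), each $B_k|_{z=w}$ is a Siegel modular form of weight $k$ with trivial character for $Sp_4(\mathbb{Z})$. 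The Igusa invariants span the low-weight graded pieces of $M_*(Sp_4(\mathbb{Z}))$, so equality can be tested by a finite Fourier comparison. Propositions \ref{PropBFourier}(1) and \ref{PropIgusaFourier} match at leading order with the asserted constants $1,1,-2^3\!\cdot\! 3^4,2^3\!\cdot\! 3^6$, and at weights $10$ and $12$ the vanishing of the constant and $q_i$-only Fourier terms in $B_k|_{z=w}$ rules out admixtures with $\psi_4\psi_6$, $\psi_4^3$ or $\psi_6^2$. Substituting these identifications into the formulas above produces $t_4|_{z=w}=-3\,B_4|_{z=w}$, $t_6|_{z=w}=-2\,B_6|_{z=w}$, $t_{10}|_{z=w}=2^9\!\cdot\!3\,B_{10}|_{z=w}$, and $t_{12}|_{z=w}=2^9\,B_{12}|_{z=w}$, which is precisely the asserted equality in $\mathbb{P}(4,6,10,12)$.

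The main obstacle is this second step, namely rigorously identifying $B_k|_{z=w}$ with the correct Igusa invariants. It is strongly supported by the Fourier data, but rigor requires controlling the spaces $M_k(Sp_4(\mathbb{Z}))$ at weights $10$ and $12$ well enough to lift matching of a handful of leading Fourier coefficients to full equality, and confirming that the $\Psi$-invariance of each $B_k$ really translates into trivial character on $Sp_4(\mathbb{Z})$ (so $B_k|_{z=w}$ lies in the classical Siegel ring and not merely in a ring for a congruence subgroup).
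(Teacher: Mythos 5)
Your proposal is correct and follows essentially the same route as the paper: restrict to $\{z=w\}$, identify that locus with the Clingher--Doran subfamily via Propositions \ref{PropModularFormIso} and \ref{PropS(t)CD}, use Proposition \ref{PropCDIgusa} to bring in the Igusa invariants, observe that each $B_k|_{z=w}$ is a Siegel modular form of weight $k$ for $Sp_4(\mathbb{Z})$, and pin down the scalars by comparing the Fourier expansions of Propositions \ref{PropIgusaFourier}/\ref{PropCDFourier} and \ref{PropBFourier}(1); your numerical constants all agree with the paper's. The only (minor) difference is that you match $B_k|_{z=w}$ directly against $\psi_4,\psi_6,\chi_{10},\chi_{12}$ rather than against $(\alpha:\beta:\gamma:\delta)$, and you are somewhat more explicit than the paper about why the cuspidality of $B_{10}|_{z=w}$ and $B_{12}|_{z=w}$ excludes admixtures of $\psi_4\psi_6$, $\psi_4^3$, $\psi_6^2$.
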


\begin{proof}
Due to the property of Siegel modular forms for $Sp_4(\mathbb{Z})$ and Proposition \ref{PropCDIgusa},
the restrictions of the  Dern-Krieg theta functions on the locus $\{z=w\}$
can be
expressed by
 $\alpha, \beta, \gamma$ and $\delta$ of the family (\ref{CDFamily}). 
 For example,
 $B_{10}(W)|_{z=w}$ should be given by a linear combination of $\gamma$ and $\alpha\beta$.
Moreover, by using Proposition \ref{PropCDFourier} and Proposition \ref{PropBFourier} (1), 
we can obtain the following exact expression:
\begin{align*}
(\alpha:\beta:\gamma:\delta)=(B_4(W)|_{z=w} : B_6(W)|_{z=w}: -2^9\cdot 3 \cdot B_{10}(W)|_{z=w} : 2^9 \cdot B_{12}(W)|_{z=w}).
\end{align*}
By recalling  Proposition \ref{PropS(t)CD},
we have  the assertion.
\end{proof}

\begin{thm}\label{ThmThetaExpression}
\begin{align*}
&(t_4(W):t_6(W):t_{10}(W):t_{12}(W):t_{18}(W))\\
&=(-3 \cdot B_4(W): -2 \cdot B_6 (W): 2^9 \cdot 3 \cdot B_{10} (W) : 2^9 \cdot B_{12} (W): -2^{16} \cdot B_{18} (W) ).
\end{align*}
\end{thm}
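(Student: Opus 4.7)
The strategy exploits that both systems $\{t_j\}$ and $\{B_j\}$ generate the same polynomial ring of Hermitian modular forms of trivial character, and then uses the restrictions to the two reflection divisors $\{z=w\}$ and $\{z=-w\}$ supplied by Propositions~\ref{PropModularFormIso} and~\ref{PropBFourier}.

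First I would verify that each $B_j(W)$ ($j \in \{4,6,10,12,18\}$) is a Hermitian modular form of weight $j$ and trivial character for $\tilde{O}^+(A)$ under the identification $f$ of~(\ref{ModularIso}). Invariance under $\Gamma(\mathfrak{O}_K)$ is immediate from the Burkhardt-invariance of the polynomials $B_j$ under the representation $\Psi$; invariance under the additional involutions $T_1, T_2 \in \tilde{O}^+(A)$ follows from the symmetries $\Theta_k({}^tW) = \Theta_k(W)$ together with the stability of the lattice $\mathfrak{O}_K^2 + \frac{1}{\sqrt{3}}p_k$ under the sign flip on the second coordinate. Combined with Theorem~\ref{ThmModular} and the algebraic independence of $B_4, B_6, B_{10}, B_{12}, B_{18}$ as polynomials in $\Theta_0, \ldots, \Theta_4$, this forces $\mathbb{C}[t_4, t_6, t_{10}, t_{12}, t_{18}] = \mathbb{C}[B_4, B_6, B_{10}, B_{12}, B_{18}]$ as graded polynomial rings.

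I then determine each $t_j$ weight by weight. The weight-$4$ and weight-$6$ graded components are one-dimensional, so $t_4 = c_4 B_4$ and $t_6 = c_6 B_6$; restricting to $\{z=w\}$ and invoking Lemma~\ref{Proptzw} gives $c_4 = -3$, $c_6 = -2$. For weights $10$ and $12$, write $t_{10} = c_{10} B_{10} + c' B_4 B_6$ and $t_{12} = c_{12} B_{12} + c''_1 B_6^2 + c''_2 B_4^3$. The proof of Lemma~\ref{Proptzw} identifies $B_4|_{z=w}, B_6|_{z=w}, B_{10}|_{z=w}, B_{12}|_{z=w}$ (up to the explicit scalars displayed there) with the Clingher--Doran parameters $\alpha, \beta, \gamma, \delta$ of~(\ref{CDFamily}), and hence with algebraically independent Siegel modular forms on $\mathfrak{S}_2$. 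The monomial restrictions are therefore linearly independent in each weight, and Lemma~\ref{Proptzw} forces $c_{10} = 2^9\cdot 3$, $c_{12} = 2^9$, with all other coefficients equal to zero.

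For weight $18$, write $t_{18} = \kappa B_{18} + Q(B_4, B_6, B_{10}, B_{12})$, where $Q$ is a weight-$18$ linear combination of the four monomials $B_6 B_{12}, B_4^2 B_{10}, B_6^3, B_4^3 B_6$. By Proposition~\ref{PropModularFormIso} the left side vanishes identically on $\{z=w\}$, and by Proposition~\ref{PropBFourier}(1) the summand $\kappa B_{18}$ also vanishes there; hence $Q(B_4|_{z=w}, B_6|_{z=w}, B_{10}|_{z=w}, B_{12}|_{z=w}) \equiv 0$. Algebraic independence of the four restricted Siegel modular forms then forces $Q \equiv 0$, so that $t_{18} = \kappa B_{18}$ as Hermitian modular forms. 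The scalar $\kappa$ is pinned down on the second reflection divisor via Theorem~\ref{ThmModular}(2): the relation $d_{90}(t) = s_{45}^2$ together with the vanishing of $s_{45}$ on $f^{-1}(\{z=-w\})$ shows that $d_{90}(-3 B_4, -2 B_6, 2^9 \cdot 3\, B_{10}, 2^9 B_{12}, \kappa B_{18})$ vanishes to order two on $\{z=-w\}$, and substituting the Fourier expansions of Proposition~\ref{PropBFourier}(2) at leading order yields $\kappa = -2^{16}$.

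The principal obstacle is the numerical determination of $\kappa$: the polynomial $d_{90}$ of~(\ref{d90}) has more than one hundred monomials, so the computation must be organized to retain only the leading $(q_1 q_2)$-contributions from $B_{10}, B_{12}$ and the leading $(q_1 q_2)^2$-contribution from $B_{18}$ on $\{z=-w\}$; with that truncation the identification of the constant $-2^{16}$ becomes tractable.
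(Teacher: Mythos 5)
Your proposal is correct and follows essentially the same route as the paper: the coefficients in weights $4,6,10,12$ are fixed by restriction to $\{z=w\}$ via Lemma \ref{Proptzw}, and the remaining constant on $B_{18}$ is pinned down by forcing $d_{90}$ to vanish on $\{z=-w\}$ using the expansions of Proposition \ref{PropBFourier}(2), exactly as in the paper's computation of the leading term $q_1^7q_2^7(C_{18}+2^{16})(\cdots)$. The only difference is that you spell out the weight-by-weight linear-algebra argument (algebraic independence of the restricted Igusa invariants killing the correction monomials such as $B_4B_6$, $B_6^2$, $B_4^3$ and the four weight-$18$ monomials) that the paper compresses into the phrase ``we can take a constant $C_{18}$ such that \dots''.
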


\begin{proof}
By considering the degeneration of our $K3$ surface $S([t])$ on the locus $\{z=w\}$ as in Proposition \ref{PropReflection} (2),
together with  Proposition \ref{PropBFourier} (1) and Lemma \ref{Proptzw},
we can take a constant $C_{18}$ such that  
\begin{align*}
&(t_4(W):t_6(W):t_{10}(W):t_{12}(W):t_{18}(W))\\
&=(-3 \cdot B_4(W): -2\cdot B_6 (W): 2^9 \cdot 3 \cdot B_{10} (W) : 2^9 \cdot B_{12} (W): C_{18} \cdot B_{18} (W) ).
\end{align*}

On the other hand, by  Proposition \ref{PropModularFormIso},
the modular form $d_{90}(t)$ of (\ref{d90})  should be zero on the locus ${z=-w}$.
According to (\ref{d90}) and Proposition \ref{PropBFourier} (2),
the leading term of the Fourier expansion of
$
d_{90}(W)|_{z=-w}
$ 
is calculated as
$$
q_1^7 q_2^7  \times (C_{18} + 2^{16}) \times (\text{ a polynomial in } \xi , \xi^{-1}).
$$
Since this leading term should be equal to $0$,
it follows $C_{18}=-2^{16}$.
\end{proof}

By the above theorem,
together with the definitions of the Dern-Krieg theta functions
and the Burkhardt invariants,
we have the following result. 

\begin{cor}\label{CorQ-rational}
The modular forms
$W \mapsto t_j(W)$ $(j\in\{4,6,10,12,18\})$ on $\mathbb{H}_I$,
which are given as the inverse of the period mapping for the family of $K3$ surface $S([t])$,
 are $\mathbb{Q}$-rational.
\end{cor}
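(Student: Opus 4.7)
The plan is to read off the corollary directly from Theorem \ref{ThmThetaExpression}, reducing $\mathbb{Q}$-rationality of the inverse period forms to $\mathbb{Q}$-rationality of the Burkhardt invariants $B_j(W)$.

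First I would recall the explicit identities from Theorem \ref{ThmThetaExpression}:
\begin{align*}
t_4 &= -3\,B_4,\quad t_6 = -2\,B_6,\quad t_{10} = 2^9\cdot 3\,B_{10},\\
t_{12} &= 2^9\,B_{12},\quad t_{18} = -2^{16}\,B_{18},
\end{align*}
where the scalar factors are rational. So the problem reduces to showing that each $B_j(W)$ has rational Fourier coefficients at the standard cusp of $\mathbb{H}_I$.

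Next I would trace the definitions. By construction, each $B_j(T_0,\dots,T_4)$ is a polynomial with integer coefficients in the formal variables $T_0,\dots,T_4$ (the explicit monomial listings are all $\mathbb{Z}$-linear). Hence $B_j(W)$ is an integer-coefficient polynomial in the Dern--Krieg theta series $\Theta_0(W),\dots,\Theta_4(W)$. It therefore suffices to verify that each $\Theta_k(W)$ itself admits a Fourier expansion with integer coefficients in the natural cusp coordinates on $\mathbb{H}_I$.

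For this I would use the defining series
\[
\Theta_k(W)=\sum_{g\in\mathfrak{O}_K^2+\tfrac{1}{\sqrt{3}}p_k} e^{2\pi\sqrt{-1}\,\overline{g} W\,{}^t g},
\]
in which every summand has coefficient exactly $1$. Grouping summands by the value of $\overline{g} W\,{}^t g = N(g_1)\tau+\overline{g_1}g_2 z+g_1\overline{g_2}w+N(g_2)\tau'$ exhibits $\Theta_k(W)$ as a Fourier series $\sum n(H)\,e^{2\pi\sqrt{-1}\,\mathrm{tr}(HW)}$ whose coefficients $n(H)$ are non-negative integers (the number of lattice vectors in a given coset producing a fixed Hermitian form $H$). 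The sample expansions in Proposition \ref{PropDKThetaFourier} illustrate precisely this: only integer coefficients and half-integer (rational) powers of $q_1,q_2,\zeta,\xi$ appear. Consequently $B_j(W)$, a $\mathbb{Z}$-polynomial in the $\Theta_k(W)$, has integer Fourier coefficients, and $t_j(W)$ has rational ones.

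The main potential obstacle is conceptual rather than technical: one must fix, once and for all, what ``$\mathbb{Q}$-rational'' means here, namely that the Fourier expansion at the $0$-dimensional cusp of $\mathbb{H}_I$ (in the variables $q_1,q_2$ together with the appropriate exponential coordinates for $z,w$ coming from the dual of $\mathfrak{O}_K$ under the trace pairing) has all coefficients in $\mathbb{Q}$. Once this convention is stated, the argument is purely formal from Theorem \ref{ThmThetaExpression} and the integrality of the defining theta series. I would therefore present the proof as a one-sentence reduction to Theorem \ref{ThmThetaExpression}, followed by the two observations ``$B_j\in\mathbb{Z}[\Theta_0,\dots,\Theta_4]$'' and ``$\Theta_k\in\mathbb{Z}[\![q_1,q_2,\dots]\!]$.''
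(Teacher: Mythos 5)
Your proposal is correct and follows exactly the paper's own (very brief) argument: the authors likewise deduce the corollary from Theorem \ref{ThmThetaExpression} combined with the integrality of the Dern--Krieg theta series and of the Burkhardt polynomials. Your version merely spells out the same chain of reductions in more detail, including the useful clarification of what ``$\mathbb{Q}$-rational'' should mean at the cusp.
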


$\mathbb{Q}$-rational modular forms are very important in number theory.
In fact,
we can obtain  arithmetic results of 
the family of elliptic curves in (\ref{EllipticWeierstrass})
and
the (normalized) Eisenstein series $g_j(\tau)$ $(j=2,3)$
from the relation 
between algebraic varieties and $\mathbb{Q}$-rational modular forms.
Therefore, this corollary suggests that
our family of $K3$ surfaces will be a good model
for arithmetic researches  of $K3$ surfaces.

\section*{Acknowledgment}
The first author would like to express his gratitude to Prof. Manabu Oura for valuable discussions about complex reflection groups.
The first author  is supported by JSPS Grant-in-Aid for Scientific Research (18K13383)
and 
MEXT LEADER.
The second author is supported by JSPS Grant-in-Aid for Scientific Research (19K03396).

{\small

}

\vspace{3mm}

\begin{center}
\hspace{7.7cm}\textit{Atsuhira  Nagano}\\
\hspace{7.7cm}\textit{Faculty of Mathematics and Physics}\\
 \hspace{7.7cm} \textit{Institute of Science and Engineering}\\
\hspace{7.7cm}\textit{Kanazawa University}\\
\hspace{7.7cm}\textit{Kakuma, Kanazawa, Ishikawa}\\
\hspace{7.7cm}\textit{920-1192, Japan}\\
 \hspace{7.7cm}\textit{(E-mail: atsuhira.nagano@gmail.com)}
  \end{center}

\vspace{1mm}

\begin{center}
\hspace{7.7cm}\textit{Hironori  Shiga}\\
\hspace{7.7cm}\textit{Graduate School of Science }\\
\hspace{7.7cm}\textit{Chiba University}\\
\hspace{7.7cm}\textit{Yayoi-cho 1-33, Inage-ku, Chiba}\\
\hspace{7.7cm}\textit{263-8522, Japan}\\
 \hspace{7.7cm}\textit{(E-mail: shiga@math.s.chiba-u.ac.jp)}
  \end{center}


\begin{thebibliography}{20}


\bibitem[B]{B} H. Burkhardt,
\newblock{\em Untersuchungen aus dem Gebiete der hyperelliptischen Modulfunctionen Zweiter Theil,}
\newblock{Math. Ann., {\bf 38}, 1890, 161-224}

\bibitem[CD]{CD} A. Clinger and C. Doran,
\newblock{\em Lattice polarized $K3$ surfaces and Siegel modular forms,}
\newblock{Adv. Math., {\bf 231}, 2012, 172-212}


\bibitem[DK]{DK} T. Dern and A. Krieg,
\newblock{\em Graded rings of Hermitian modular forms of degree $2$,}
\newblock{Manuscripta Math., {\bf 110}, 2003, 251-272}




\bibitem[FS1]{FSM} E. Freitag and R. Salvati Manni,
\newblock{\em The Burkhardt group and modular forms,}
\newblock{Transformation Groups, {\bf 9} (1), 2004, 25-45}


\bibitem[FS2]{FSMTheta} E. Freitag and R. Salvati Manni,
\newblock{\em Hermitian modular forms and the Burkhardt quartic,}
\newblock{Manuscripta Math., {\bf 119}, 2006, 57-59}


\bibitem[I]{I35} 
J. Igusa,
\newblock {\em Modular Forms and Projective Invariants,}
\newblock {Amer. J. Math.,  {\bf 89}, 1967, 817-855}








\bibitem[M]{Muller} 
R. M\"uller,
\newblock {\em Hilbertsche Modulformen und Modulfunctionen zu $\mathbb{Q}(\sqrt{5})$,}
\newblock {Arch. Math., {\bf 45}, 1985, 239-251}






\bibitem[NS]{NS} 
A. Nagano and H. Shiga,
\newblock {\em Modular map for the family of abelian surfaces via elliptic $K3$ surfaces,}
\newblock {Math. Nachr., {\bf 288}, 2014, 89-114}



\bibitem[N1]{NHilb} 
A. Nagano,
\newblock {\em Inverse period mappings of $K3$ surfaces
and
a construction of modular forms for a lattice with the Kneser conditions,}
\newblock {Kyoto J. Math., {\bf 53} (4), 2013,  815-843}




\bibitem[N2]{N} 
A. Nagano,
\newblock {\em Inverse period mappings of $K3$ surfaces
and
a construction of modular forms for a lattice with the Kneser conditions,}
\newblock {preprint,  2019}


\bibitem[R]{R}
B. Runge,
\newblock{\em On Siegel modular forms part I,}
\newblock{J. Reine. Angew. Math., {\bf 436}, 1993, 57-85}


\bibitem[Sc]{S}
M. Sch\"utt,
\newblock{\em Arithmetic of K3 surfaces,}
\newblock{Jahresbericht der DMV, {\bf 111}, 2009, 23-41}





\bibitem[Sh]{Shimura}
G. Shimura,
\newblock{\em Abelian varieties with complex multiplication and modular functions,}
\newblock{Princeton Univ. Press, 1998}



\bibitem[SI]{SI} 
T. Shioda and  H. Inose,
\newblock {\em On singular $K3$ surfaces,}
\newblock {Complex Analysis and Algebraic Geometry,  1977, 119-136}



\bibitem[ST]{ST} 
G. C. Shephard and J. A. Todd,
 {\em Finite unitary reflection groups,}
\newblock {Canad. J. Math.,  {\bf 6}, 1954, 274-304}


\end{thebibliography}
\end{document}